\NeedsTeXFormat{LaTeX2e}[1994/12/01]
\documentclass[10 pt,draft]{amsart}
\usepackage{amssymb}
\usepackage{latexsym}
\usepackage{amsmath}
\usepackage{amsthm}
\usepackage{amsfonts}
\usepackage{amscd}
\usepackage{amsxtra}
\usepackage{xcolor}

\setlength{\textwidth}{6.5in}
\setlength{\textheight}{8.5in}
\setlength{\oddsidemargin}{0.1in}
\setlength{\evensidemargin}{0.1in}
\setlength{\topmargin}{-.1 in}

\listfiles

\newcommand{\N}{\mathbb{N}}

\newtheorem{theorem}{Theorem}[section]
\newtheorem{lemma}[theorem]{Lemma}

\newtheorem{definition}[theorem]{Definition}

\newtheorem{remark}[theorem]{Remark}
\newtheorem{corollary}[theorem]{Corollary}

\newtheorem{question}[theorem]{Question}


\title[]{Extending Families of Disjoint Hypercyclic Operators}
\author[\"O. Martin]{\"Ozg\"ur Martin}
\address[\"O. Martin]{Department of Mathematics, Mimar Sinan Fine Arts University, Silah\c{s}\"or Cad. 71, Bomonti \c{S}i\c{s}li 34380, Istanbul, Turkey}
\email{ozgur.martin@msgsu.edu.tr}

\author[R. Sanders]{Rebecca Sanders}
\address[R. Sanders]{
	Department of Mathematical and Statistical Sciences, Marquette University, Milwaukee WI 53201}
\email{rebecca.sanders@marquette.edu}

\date{December 5, 2023 \\
\indent
{\it 2020 Mathematics Subject Classification.}  Primary: 47A16, 46A04; Secondary: 46A32}
\keywords{hypercyclic vectors, hypercyclic operators, disjoint hypercyclicity, }

\begin{document}

\begin{abstract}
    In the present note, we solve two open questions posed by Salas in \cite{Sa4} about disjoint hypercyclic operators. First, we show that given any family $T_1, \dots, T_N$ of disjoint hypercyclic operators, one can always select an operator $T$ such that the extended family $T_1, \dots, T_N, T$ of operators remains disjoint hypercyclic. In fact, we prove that the set of operators $T$ which can extend the family of disjoint hypercyclic operators is dense in the strong operator topology in the algebra of bounded operators. Second, we show the existence of two disjoint weakly mixing operators that fail to possess a dense d-hypercyclic manifold. Thus, these operators satisfy the Disjoint Blow-up/Collapse property but fails to satisfy the Strong Disjoint Blow-up/Collapse property, a notion which was introduced by Salas as a sufficient condition for having a dense linear manifold of disjoint hypercyclic vectors. 
\end{abstract}

\maketitle

\section[Introduction]{Introduction}

Since its introduction by Bernal \cite{BG2} and independently by B\`es and Peris \cite{BeP1}, the notion of disjoint hypercyclicity has become one of the central themes in linear dynamics. Let $X$ denote a separable and infinite dimensional Banach space, and let $\mathcal{B}(X)$ denote the algebra of bounded operators. Recall that an operator $T \in \mathcal{B}(X)$ is called \textit{hypercyclic} if it posseses a dense orbit 
$\mbox{Orb}(T, x) = \{T^nx:n \in \N\}$ for some vector $x \in X$. Such a vector $x$ is called as a \textit{hypercyclic vector} for the operator $T$, and the set of hypercyclic vectors for $T$ is denoted by $\mathcal{HC}(T)$. Disjoint hypercyclicity provides a method for studying the independence of the orbits of a finite family of hypercyclic operators.  

\begin{definition}\label{Disjointness}
    {\rm
	The operators $T_1, \dots, T_N \in \mathcal{B}(X)$, with $N \ge 2$, are \textit{disjoint hypercyclic}, or \textit{d-hypercyclic} for short, if there exists a vector $x \in X$ such that the set $\{ (T_1^n x,\ldots,T_N^n x): n \in \N \}$ is dense in $X^N = X \times \dots \times X$. 
    }
\end{definition}

In other words, the operators $T_1, \dots, T_N$ are disjoint hypercyclic if the direct sum operator $T_1 \oplus \dots \oplus T_N$ has a hypercyclic vector of the form $(x,\ldots,x) \in X^N$. Following the hypercyclicity terminology, such a vector $x$ is also called as a 
\textit{d-hypercyclic vector} for the operators $T_1, \dots, T_N$. The set of d-hypercyclic vectors for the operators $T_1, \dots, T_N$ is denoted by $\mbox{d-}\mathcal{HC} (T_1, \dots, T_N)$.  Moreover, whenever the set $\mbox{d-} \mathcal{HC}(T_1, \dots, T_N)$ of d-hypercyclic vectors is dense in $X$, we say the operators $T_1, \dots, T_N$ are \textit{densely d-hypercyclic}.

Over the last fifteen years, disjoint hypercyclicity has been studied extensively. Although at first, the dynamics of a single operator and disjoint dynamics of finite family of operators seem to be very similar (see \cite{BG2}, \cite{BeP1}, \cite{BeMaPeSh}, \cite{Sa3}, \cite{Sa4}, and \cite{Shk2}), later work in the area proved this to be false. Especially, the work in \cite{BeMaPe}, \cite{BeMaSa},  \cite{MarPui21}, \cite{MaSa}, and \cite{SaSh} provide several aspects of their differing dynamical properties. Recent work reveal connections between disjoint hypercyclicity and other areas of mathematics such as the notion of multiple recurrence in topological dynamics \cite{CosPar12} and Sidon sets in additive combinatorics  \cite{Car23}. Also, see \cite{Bay23}, \cite{MarMenPui22} and \cite{MarPui21} for the study of disjointness among families of frequently hypercyclic operators. 

In the present paper, we answer two questions posed by Salas \cite{Sa4} about disjoint hypercyclic operators. The first question posed by Salas focuses on the existence of disjoint hypercyclic operators. It has been shown independently by many authors that, given $N \geq 2$, every separable infinite dimesional Fr\'echet space supports disjoint hypercyclic operators $T_1, \dots, T_N$ (see \cite{BeMaPeSh}, \cite{Sa3}, and \cite{Shk2}). Salas raised the following natural question \cite[Question 3]{Sa4}: 

\begin{question}\label{Q1}
    Given d-hypercyclic operators $T_1, \dots, T_N \in \mathcal{B}(X)$, can one select an additional operator $T_{N+1}$ such that extended family of operators $T_1, \dots, T_N, T_{N+1}$ remains d-hypercyclic? 
\end{question}

In \cite{MaSa}, the authors gave a partial answer to the disjoint supercyclic version of the question for families of weighted shift operators. They showed that if the weighted shifts $B_1, \dots, B_N$ are d-supercyclic, one can always add another operator $T_{N+1}$ such that $B_1, \dots, B_N, T_{N+1}$ are still d-supercyclic although they also proved that it is not always possible to choose this additional operator as another weighted shift. 

In Section \ref{Sec1}, we give a complete positive answer to Question \ref{Q1}. In fact, we show that the set of operators $T_{N+1}$ which extend the family to a new tuple of $N+1$ of d-hypercyclic operators is dense in the strong operator topology (or, SOT-dense for short) in the algebra of operators $\mathcal{B}(X)$. The result also enables us to show the existence of two densely d-hypercyclic operators which fail to satisfy the Disjoint Blow-up/Collapse property, and it provides an alternative technique to establish the existence of d-hypercyclic operators on every separable infinite dimensional Fr\'echet space admitting a continuous norm.

In the rest of the paper, we focus on the set of d-hypercyclic vectors for finite families of operators. It is easy to see that if an operator $T$ supports a hypercyclic vector, then the set of hypercyclic vectors $\mathcal{HC}(T)$ is a dense $G_\delta$. Furthermore, there is dense hypercyclic manifold in which every nonzero vector in the manifold is a hypercyclic vector for the operator \cite[Theorem 1.30]{BayMat}.  On the positive side, Salas \cite{Sa4} defined a stronger version of the Disjoint Blow-up/Collapse property (see \cite[Definition 1]{Sa4}), and he showed this stronger version provided a sufficient condition for the existence of a dense manifold of d-hypercyclic vectors. Moreover, he showed that for finite families of weighted shift operators, the Disjoint Blow-up/Collapse property and its strong version are equivalent. This led Salas to raise the following question \cite[Question 1]{Sa4}: 

\begin{question}\label{Q2}
    Are the Disjoint Blow-up/Collapse property and its stronger version equivalent?
\end{question}

In Section \ref{Sec2}, we provide a counterexample consisting of two d-weakly mixing operators whose set of d-hypercyclic vectors is dense but every d-hypercyclic manifold is one-dimensional. Since d-weakly mixing operators satisfy the Disjoint Blow-up/Collapse property, these operators provide a negative answer of Question \ref{Q2}. 

\section[Extending d-Hypercyclic Families]{Extending d-Hypercyclic Families}\label{Sec1}

In \cite{Ch}, Chan proved that the hypercyclic operators on a separable infinite dimensional Hilbert space form a SOT-dense subset of the algebra of continuous linear operators. This result was later extended further to hypercyclic operators on Fr\'echet spaces by B\`es and Chan \cite{BeCh}.
Motivated by Chan's result, we answer Question \ref{Q1} in the positive by showing that for any finite family of d-hypercyclic operators $T_1, \ldots, T_N \in \mathcal{B}(X)$, the collection of operators which extend the operators $T_1, \ldots, T_N$ while maintaining d-hypercyclicity is SOT-dense in $\mathcal{B}(X)$. Even more, we are able to maintain some control over the d-hypercyclic vectors in the extended family of d-hypercyclic operators.

\begin{theorem}\label{Extending}
	Let $T_1, \dots, T_N \in \mathcal{B}(X)$ with $N \in \mathbb{N}$ be d-hypercyclic operators, and let $\mathcal{A} \subseteq \mbox{d-}\mathcal{HC}(T_1, \dots, T_N)$.  If the set $\mathcal{A}$ is countable and linearly independent, then the collection
    \begin{align*}
        \left\{ T \in \mathcal{B}(X) : \mathcal{A} \subseteq \mbox{d-}\mathcal{HC} (T_1, \dots, T_N, T) \right\}
    \end{align*}
    is SOT-dense in $\mathcal{B}(X)$.
\end{theorem}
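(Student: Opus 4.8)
The plan is to establish the SOT-density by proving the following approximation statement: for every $S \in \mathcal{B}(X)$, every finite collection $y_1, \dots, y_p \in X$, and every $\e > 0$, there is an operator $T$ with $\|T y_j - S y_j\| < \e$ for $1 \le j \le p$ and $\mathcal{A} \subseteq \mbox{d-}\mathcal{HC}(T_1, \dots, T_N, T)$. Since a basic SOT-neighborhood of $S$ has exactly this form, producing such a $T$ witnesses density. I would first record the elementary but crucial observation that, because each $x \in \mathcal{A}$ is a d-hypercyclic vector for $T_1, \dots, T_N$, the orbit $n \mapsto (T_1^n x, \dots, T_N^n x)$ is dense in the perfect space $X^N$, and hence meets every nonempty open subset of $X^N$ along an \emph{infinite} set of times $n$. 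This supplies an unlimited stock of arbitrarily large ``good times'' at which the first $N$ coordinates can be steered to any prescribed target, leaving me free to design only the last coordinate through the choice of $T$.

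Writing $\mathcal{A} = \{x_1, x_2, \dots\}$ and fixing a countable dense subset $\{W_m\}$ of $X^{N+1}$, I would enumerate a master list of \emph{tasks}, each task specifying a vector $x_k \in \mathcal{A}$, a target $W_m = (u^{(1)}, \dots, u^{(N)}, v)$, and a tolerance $\delta > 0$, arranged so that completing every task forces each orbit $n \mapsto (T_1^n x_k, \dots, T_N^n x_k, T^n x_k)$ to be dense in $X^{N+1}$. The operator $T$ is then built inductively by prescribing its values on a linearly independent set $B$ consisting of the test vectors $y_j$ (where $T y_j$ is set within $\e$ of $S y_j$), the vectors $x_k$ themselves, and doubly-indexed auxiliary vectors $e_{k,j}$ that realize the orbit, $e_{k,j} = T^j x_k$. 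Processing the tasks in order, when the current task asks $x_k$ to approximate $(u^{(1)}, \dots, u^{(N)}, v)$ I select a fresh good time $n$, larger than every time used so far, with $\|T_i^n x_k - u^{(i)}\| < \delta$ for all $i \le N$; I then fill the as-yet-undefined orbit vectors $e_{k,\ell}$ for $\ell$ up to $n-1$ with fresh, linearly independent vectors of very small norm, and I set $e_{k,n}$ equal to a fresh vector within $\delta$ of $v$. Because $\mathcal{A}$ is linearly independent and $X$ is infinite dimensional, each newly introduced image can be chosen linearly independent of the finitely many vectors already committed, so that the rule $T e_{k,\ell} = e_{k,\ell+1}$ (together with $T x_k = e_{k,1}$ and $T y_j \approx S y_j$) never produces a contradiction and each $x_k$ acquires a genuinely free orbit after its first step.

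The step I expect to be the main obstacle is guaranteeing that this partially defined $T$ extends to a \emph{bounded} operator on all of $X$. I would address this by choosing the auxiliary vectors $e_{k,j}$ so that the whole system $B$ is a basic sequence with rapidly decaying norms at the filler steps, making the shift-type assignment $e_{k,\ell} \mapsto e_{k,\ell+1}$ uniformly bounded, and by letting the filler vectors additionally run through a dense sequence so that $\overline{\mbox{span}}(B) = X$; a bounded operator densely defined in this way extends uniquely to an element of $\mathcal{B}(X)$. A secondary technical point is that a given $x_k$ may lie in the span of the $y_j$ together with earlier $x_i$; a counting argument using the linear independence of $\mathcal{A}$ shows this can occur for at most $p$ indices $k$, and for these the forced value of $T x_k$ remains adjustable through the slack in $T y_j$ and through the freshly chosen $e_{i,1}$, so it can be absorbed without damaging the construction. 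Once boundedness is secured, the infinitely many completed tasks yield density of every orbit, giving $\mathcal{A} \subseteq \mbox{d-}\mathcal{HC}(T_1, \dots, T_N, T)$ with $T$ in the prescribed SOT-neighborhood of $S$, which is exactly what is required.
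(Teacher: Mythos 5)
There is a genuine gap, and it sits exactly where you predicted trouble: the boundedness of $T$. As described, your construction cannot yield a bounded operator. First, a concrete quantitative obstruction: at a good time $n$ you send the filler $e_{k,n-1}$, chosen with ``very small norm,'' to a vector $e_{k,n}$ within $\delta$ of an arbitrary target $v$; this forces $\|T\| \ge \|e_{k,n}\| / \|e_{k,n-1}\| \approx \|v\| / \|e_{k,n-1}\|$, and since the targets $v$ run through a dense (hence norm-unbounded) subset of $X$ while the fillers stay small, no finite bound on $\|T\|$ survives infinitely many tasks. Second, even if you graded the filler norms so that each single step $e_{k,\ell} \mapsto e_{k,\ell+1}$ had controlled ratio, bounding $\|Te\|$ for each individual $e$ in the system $B$ never yields boundedness of $T$ on $\mbox{span}(B)$: one needs uniform estimates over all finite linear combinations, which is essentially the requirement that $B$ (in some ordering) be a Schauder basis of $X$ with controlled basis constant. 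That requirement is fatal in two ways: not every separable Banach space has a Schauder basis (Enflo), while the theorem is asserted for all such spaces; and a basic sequence can have no nonzero limit point (its coordinate functionals annihilate any limit of distinct basis vectors), so whenever $\mathcal{A}$ is dense as a set --- precisely the case needed to deduce Corollary \ref{D} --- the system $B \supseteq \mathcal{A}$ cannot be basic at all. Third, there is an over-determination problem: once $\overline{\mbox{span}}(B) = X$ and $T$ is bounded, the values of $T$ on the fillers determine $T$ on all of $X$ by continuity, so you are not actually free to assign $T x_k = e_{k,1}$ and $T y_j \approx S y_j$ independently of them; reconciling those assignments is exactly the hard analytic content, and the proposal supplies no mechanism for it.

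For contrast, the paper never builds $T$ vector-by-vector. It fixes a single mixing operator $B \in \mathcal{B}(X)$ and takes $T = L^{-1} B L$, so boundedness, invertibility of $L$, and hypercyclicity of $T$ are automatic. SOT-density comes from Lemma \ref{SOT-dense}, whose engine is Grivaux's theorem that for any two countable dense linearly independent sets $\mathcal{A}_1, \mathcal{A}_2$ there is an invertible $L$ with $L(\mathcal{A}_1) = \mathcal{A}_2$ and $\|L - I\|$ arbitrarily small; and the simultaneity with the orbits of $T_1, \dots, T_N$ at your ``good times'' is obtained by choosing $\mathcal{A}_2$, via Baire category, inside the intersection of the sets $\mathcal{HC}(\{ B^{n_{k,j}^{(f)}} \}_{k=1}^{\infty})$, using that a mixing operator is hereditarily hypercyclic along every sequence. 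In effect, the freedom you tried to realize by hand (steering the last coordinate of the orbit) is realized by where $L$ sends $\mathcal{A}$, and all of the delicate norm estimates are packaged inside Grivaux's lemma and the existence of mixing operators. Any salvage of a direct construction would essentially require you to reprove results of that strength.
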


For a given hypercyclic operator $B \in \mathcal{B}(X)$, it was noted in \cite{BeCh1} that its similarity orbit
\begin{align}
    \mathcal{S}(B) = \{ L^{-1} B L : \mbox{$L \in \mathcal{B}(X)$ invertible} \}
    \nonumber
\end{align}
consists entirely of hypercyclic operators and is SOT-dense in $\mathcal{B}(X)$.  
The proof of Theorem \ref{Extending} takes advantage of the relationship between a hypercyclic operator and its conjugates.
The density portion of Theorem \ref{Extending} utilizes an SOT-dense subset within the similarity orbit $\mathcal{S}(B)$.

\begin{lemma}\label{SOT-dense}
    Let $B \in \mathcal{B}(X)$ be a hypercyclic operator, and let $\mathcal{A}_1, \mathcal{A}_2$ be two countable dense and linearly independent sets in $X$.
    The collection 
    \begin{align*}
        \mathcal{C} = \left\{ L^{-1} B L: L \in \mathcal{B}(X) \mbox{ invertible and } L(\mathcal{A}_1) = \mathcal{A}_2  \right\}
    \end{align*}
    of hypercyclic operators is SOT-dense in in $\mathcal{B}(X)$.
\end{lemma}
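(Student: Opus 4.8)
The plan is to unwind the strong operator topology and reduce the lemma to a single local-approximation problem: given an arbitrary $S \in \mathcal{B}(X)$, linearly independent vectors $x_1,\dots,x_m \in X$, and $\epsilon>0$, I must produce one invertible operator $L$ that meets both a \emph{local} demand, $\|L^{-1}BL\,x_j - Sx_j\| < \epsilon$ for $j=1,\dots,m$, and the \emph{global} demand $L(\mathcal{A}_1)=\mathcal{A}_2$. Writing $v_j = Lx_j$, the estimate is equivalent to $\|B v_j - L(Sx_j)\|$ being small relative to $\|L^{-1}\|$, so it only constrains $L$ on the finite-dimensional subspace $F=\operatorname{span}\{x_1,\dots,x_m,Sx_1,\dots,Sx_m\}$; the difficulty is that the condition $L(\mathcal{A}_1)=\mathcal{A}_2$ constrains $L$ on the \emph{dense} set $\mathcal{A}_1$, and the two prescriptions must be made compatible.

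For the local demand I would reuse the mechanism behind the SOT-density of $\mathcal{S}(B)$ from \cite{BeCh1}. Since $B$ is hypercyclic it has dense range, so for any targets I can approximately solve $B v_j \approx w_j$; fixing a hypercyclic vector and choosing these approximate solutions and the images $w_j = L(Sx_j)$ in sufficiently general position, I obtain an injective linear map $\Phi$ on $F$, with controlled norm and controlled inverse on $\Phi(F)$, for which the conjugate $L^{-1}BL$ moves each $x_j$ to within $\epsilon$ of $Sx_j$. This is the routine ingredient, and it only pins down the behaviour of $L$ on the finite-dimensional $F$.

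The heart of the proof is to extend $\Phi$ to a genuine invertible operator on all of $X$ that moreover induces a bijection $\mathcal{A}_1 \to \mathcal{A}_2$. I would build $L$ and $L^{-1}$ simultaneously by a back-and-forth construction. Enumerate $\mathcal{A}_1=\{a_i\}$ and $\mathcal{A}_2=\{b_i\}$ and define $L$ as the limit of a nested family of finite partial maps starting from $\Phi$ on $F$; at odd steps, given the first not-yet-assigned $a_i$, use the density and linear independence of $\mathcal{A}_2$ to select an unused image $b_{\sigma(i)}$ so close to the value already forced on $a_i$ that the extension perturbs the operator norm by only a prescribed, summable amount, and at even steps do the symmetric thing to force every $b_i$ into the range while controlling $L^{-1}$. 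Keeping all perturbations geometrically small ensures that $L$ and $L^{-1}$ converge to bounded operators, so the limit $L$ is invertible, matches $\Phi$ on $F$ up to the allotted error, and satisfies $L(\mathcal{A}_1)=\mathcal{A}_2$; then $T=L^{-1}BL \in \mathcal{C}$ lies in the chosen SOT-neighbourhood of $S$.

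The main obstacle is precisely this reconciliation. I must match one dense, linearly independent set onto another by a bijection that extends to a \emph{bounded, boundedly invertible} operator while respecting the finitely many values already imposed on $F$. The bookkeeping has to guarantee, on the one hand, that each chosen image stays close to its forced value --- which is where the density of both $\mathcal{A}_1$ and $\mathcal{A}_2$ is indispensable, and where one must arrange the enumeration so that the finite data on $F$ never conflicts with a later forced assignment (e.g.\ by performing the matching inside a fixed topological complement of $F$ and perturbing only there) --- and, on the other hand, that the cumulative norm perturbations of both $L$ and $L^{-1}$ are summable, so that invertibility is not lost in the limit. Controlling these two budgets at once, rather than either estimate in isolation, is the crux of the argument.
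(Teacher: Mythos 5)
Your reduction to a local-plus-global approximation problem is sound, and you have correctly located the difficulty; but the crux of your argument --- the back-and-forth producing a bounded, boundedly invertible $L$ with $L(\mathcal{A}_1)=\mathcal{A}_2$ that is compatible with the finite-dimensional prescription $\Phi$ --- is left as a plan, and the one concrete device you offer for the compatibility problem fails. Matching ``inside a fixed topological complement $Z$ of $F$'' is not available: a proper closed subspace is nowhere dense, so $\mathcal{A}_1\cap Z$ may well be empty, a generic element of $\mathcal{A}_1$ straddles $F$ and $Z$, and the finitely many elements of $\mathcal{A}_1$ lying in $F$ itself have images forced (approximately) by $\Phi$ yet must land \emph{exactly} in $\mathcal{A}_2$. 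What actually makes such a construction close up is a different invariant: each later perturbation must fix exactly only the finitely many already-matched image vectors --- which is possible because a new element of $\mathcal{A}_1$ is never in the span of the previously matched ones, by linear independence, so one can use a functional vanishing on that span --- while $F$ need only be preserved approximately, with summable errors. If you run the scheme this way it does work, but you should realize that what you would then be proving is precisely Grivaux's lemma \cite[Lemma 2.1]{Gri} (with an extra finite-dimensional side condition grafted on), a result the paper simply quotes.

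The paper's proof shows that the interleaving you are fighting with is unnecessary, because the two demands can be decoupled. By \cite{BeCh1}, the similarity orbit $\mathcal{S}(B)$ is already SOT-dense in $\mathcal{B}(X)$ and consists of hypercyclic operators, so it suffices to prove that $\mathcal{C}$ is \emph{norm}-dense in $\mathcal{S}(B)$. Given any invertible $L$, apply Grivaux's lemma to the countable dense linearly independent sets $\mathcal{A}_1$ and $L^{-1}(\mathcal{A}_2)$ to obtain invertible operators $J_n$ with $J_n(\mathcal{A}_1)=L^{-1}(\mathcal{A}_2)$ and $\|J_n-I\|\to 0$ (hence also $\|J_n^{-1}-I\|\to 0$); then $L_n=LJ_n$ satisfies $L_n(\mathcal{A}_1)=\mathcal{A}_2$ exactly, while $L_n^{-1}BL_n=J_n^{-1}(L^{-1}BL)J_n\to L^{-1}BL$ in operator norm. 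In other words, once you have \emph{any} invertible $L$ whose conjugate meets the local demand, you can repair the global demand by composing on the right with a near-identity operator, and norm-continuity of conjugation guarantees the local demand survives the repair. That single observation replaces all of your back-and-forth bookkeeping, and it is the step missing from your proposal.
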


\begin{proof}
    For the hypercyclic operator $B \in \mathcal{B}(X)$, Lemma \ref{SOT-dense} follows after showing the collection $\mathcal{C}$ is norm dense within the SOT-dense similarity orbit $\mathcal{S}(B)$, which consists entirely of hypercyclic operators.  
    Let $L \in \mathcal{B}(X)$ be any invertible operator. In \cite[Lemma 2.1]{Gri}, Grivaux showed that for any $\epsilon > 0$ and any two countable dense linearly independent sets $\widetilde{\mathcal{A}}_1, \widetilde{\mathcal{A}}_2$ in $X$, there exists an invertible operator 
    $J \in \mathcal{B}(X)$ for which 
    $J(\widetilde{\mathcal{A}}_1) = \widetilde{\mathcal{A}}_2$ and $\| J - I \| < \epsilon$.  
    From this result, we may select a sequence $(J_n)_{n = 1}^{\infty}$ of invertible operators in $\mathcal{B}(X)$ with $J_n(\mathcal{A}_1) = L^{-1}(\mathcal{A}_2)$ and $\| J_n - I \| \longrightarrow 0$ as $n \longrightarrow \infty$.  Furthermore, since the mapping $T \mapsto T^{-1}$ is norm-continuous on the invertible operators in $\mathcal{B}(X)$, it follows $\| J_n^{-1} - I \| \longrightarrow 0$ as $n \longrightarrow \infty$. Consider the sequence $(L_n)_{n = 1}^{\infty}$ given by $L_n = L J_n$.  Each invertible operator $L_n$ satisfies $L_n(\mathcal{A}_1) = L J_n(\mathcal{A}_1) = \mathcal{A}_2$, and so $L_n^{-1} B L_n \in \mathcal{C}$.  
    Furthermore, by the convergence of the sequence $(J_n)_{n = 1}^{\infty}$, we get 
    $\| L_n^{-1} B L_n - L^{-1}B L \| = \| J_n^{-1}(L^{-1} B L) J_n - L^{-1} B L \| \longrightarrow 0$ 
    as $n \longrightarrow \infty$.
\end{proof}

The proof that each conjugate in the similarity orbit of a hypercyclic operator is hypercyclic follows from the
well-known fact that $f$ is a hypercyclic vector for a conjugate $L^{-1} B L$ if and only $Lf$ is a hypercyclic vector for the operator $B$.  This idea can be extended to d-hypercyclic vectors for a finite family of conjugates. 

\begin{lemma}\label{Conjugates}
    Let $B_1, \dots, B_M, L_1, \dots, L_M \in \mathcal{B}(X)$ with $M \in \mathbb{N}$ and the operators $L_1, \dots, L_M$ invertible.  For each integer $i$ with $1\le i \le M$, let $A_i = L_i^{-1} B_i L_i$.  The vector $f \in \mbox{d-}\mathcal{HC}(A_1, \dots A_M)$ if and only if $(L_1 f, \dots, L_M f) \in \mathcal{HC}(B_1 \oplus \cdots \oplus B_M)$.
\end{lemma}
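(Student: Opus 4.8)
The plan is to unwind both sides of the equivalence directly from the definitions, reducing everything to the single-operator fact about conjugates. Recall that $f \in \mbox{d-}\mathcal{HC}(A_1, \dots, A_M)$ means precisely that the orbit set $\{(A_1^n f, \dots, A_M^n f) : n \in \N\}$ is dense in $X^M$, while $(L_1 f, \dots, L_M f) \in \mathcal{HC}(B_1 \oplus \cdots \oplus B_M)$ means that the orbit $\{(B_1 \oplus \cdots \oplus B_M)^n (L_1 f, \dots, L_M f) : n \in \N\}$ is dense in $X^M$. So both conditions are about density of an $\N$-indexed sequence of tuples in $X^M$, and the natural strategy is to exhibit a single homeomorphism of $X^M$ carrying one orbit onto the other.

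The key computational step is to observe that conjugation commutes with taking powers: since $A_i = L_i^{-1} B_i L_i$, a standard telescoping cancellation gives $A_i^n = L_i^{-1} B_i^n L_i$ for every $n \in \N$, hence $A_i^n f = L_i^{-1} B_i^n (L_i f)$. I would then package the $M$ invertible operators into the single block-diagonal operator $L = L_1 \oplus \cdots \oplus L_M$ on $X^M$, which is invertible (and bicontinuous) because each $L_i$ is. The displayed identity $A_i^n f = L_i^{-1} B_i^n(L_i f)$ assembles coordinatewise into
\begin{align*}
    (A_1^n f, \dots, A_M^n f) = L^{-1}\big(B_1^n(L_1 f), \dots, B_M^n(L_M f)\big) = L^{-1}(B_1 \oplus \cdots \oplus B_M)^n(L_1 f, \dots, L_M f),
\end{align*}
so applying the homeomorphism $L$ sends the orbit of $(L_1 f, \dots, L_M f)$ under $B_1 \oplus \cdots \oplus B_M$ exactly onto the d-orbit of $f$ under $A_1, \dots, A_M$.

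To finish, I would invoke the general principle that a homeomorphism of a topological space maps dense sets to dense sets and has a continuous inverse that does the same. Thus the d-orbit $\{(A_1^n f, \dots, A_M^n f) : n \in \N\}$ is dense in $X^M$ if and only if its image under $L$, namely the orbit $\{(B_1 \oplus \cdots \oplus B_M)^n(L_1 f, \dots, L_M f) : n \in \N\}$, is dense in $X^M$. This is exactly the asserted equivalence, and the argument is symmetric in both directions because $L$ is invertible.

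Honestly, there is no serious obstacle here: the statement is essentially the d-hypercyclic analogue of the familiar single-operator conjugacy fact noted just before the lemma, and the only point requiring even minor care is making the block-diagonal reformulation precise so that the conjugating map acts simultaneously on all $M$ coordinates while the powers $A_i^n$ are handled by $n$-independent factors $L_i^{-1}(\cdot)L_i$. I expect the write-up to consist almost entirely of the power identity $A_i^n = L_i^{-1} B_i^n L_i$ together with the observation that an invertible (hence bicontinuous) operator on $X^M$ preserves density in both directions.
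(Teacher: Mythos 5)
Your proof is correct and takes essentially the same approach as the paper: both hinge on the identity $A_i^{n} f = (L_i^{-1} B_i L_i)^n f = L_i^{-1} B_i^{n}(L_i f)$ together with the bicontinuity of the invertible conjugating operators, so density of one orbit transfers to the other. The paper merely states this coordinatewise in sequential form ($B_i^{n_k}(L_i f) \longrightarrow x_i$ if and only if $A_i^{n_k} f \longrightarrow L_i^{-1} x_i$), whereas you package the same fact into the single block-diagonal homeomorphism $L = L_1 \oplus \cdots \oplus L_M$ of $X^M$ carrying one orbit onto the other.
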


\begin{proof}
    Variations of Lemma \ref{Conjugates} appear throughout the literature on disjoint hypercyclic. For example, see \cite[Lemma 2.1]{SaSh}.  The lemma follows from the observation that for any vectors $f, x_1, \dots, x_M \in X$, we have $B_i^{n_k} (L_i f) \longrightarrow x_i$ as $k \longrightarrow \infty$ if and only $A_i^{n_k} f = (L_i^{-1} B_i L_i)^{n_k} f = L_i^{-1} B_i^{n_k}(L_i f) \longrightarrow L_i^{-1} x_i$ as $k \longrightarrow \infty$.
\end{proof}

Theorem \ref{Extending} is established using both of these lemmas.

\begin{proof}[Proof of Theorem \ref{Extending}]
    With Lemma \ref{SOT-dense} in mind, it suffices to find a hypercyclic operator $B \in \mathcal{B}(X)$ and two countable dense and linearly independent sets
    $\mathcal{A}_1, \mathcal{A}_2$ such that
    \begin{align}
        \mathcal{A} \subseteq \mbox{d-}\mathcal{HC}(T_1, \dots, T_n, L^{-1}BL)
        \mbox{ \ \ whenever $L \in \mathcal{B}(X)$ is invertible and $L(\mathcal{A}_1) = \mathcal{A}_2$.}
        \label{Extending_01}
    \end{align}
    For the hypercyclic operator $B \in \mathcal{B}(X)$, recall that every separable, infinite dimensional Banach space admits a mixing operator
    (see Ansari \cite{A}, Bernal \cite{BG}, or Grivaux \cite{Gri2}).  Let $B$ be any such mixing operator in $\mathcal{B}(X)$.  For the set $\mathcal{A}_1$, select any countable dense and linearly independent set $\mathcal{A}_1$ with $\mathcal{A} \subseteq \mathcal{A}_1$.  In order to define set $\mathcal{A}_2$, let
    \begin{align}
      \{ (g_{1,j}, \dots, g_{N,j}, g_{N+1,j}) : j \in \mathbb{N} \}
      \label{Extending_02}
    \end{align}
    be a countable dense set in $\bigoplus_{i=1}^{N+1} X$. Since $\mathcal{A} \subseteq \mbox{d-}\mathcal{HC}(T_1, \dots, T_N)$, for each $f \in \mathcal{A}$ and each $j \in \mathbb{N}$, there exists a strictly increasing sequence $(n_{k,j}^{(f)})_{k=1}^{\infty}$ of positive integers such that
    \begin{align}
	   T_{i}^{n_{k,j}^{(f)}} f \longrightarrow g_{i,j} \mbox{ as $k \longrightarrow \infty$ for each $i$ with $1 \le i \le N$.}
	   \label{Extneding_03}
    \end{align}
    A mixing operator in hereditarily hypercyclic with respect to the full sequence $(n)_{n = 1}^{\infty}$, and so
    the set $\mathcal{HC} (\{ B^{n_{k,j}^{(f)}} \}_{k = 1}^{\infty})$ is a dense $G_{\delta}$ for each $f \in \mathcal{A}$ and each $j \in \mathbb{N}$ (see B\`es, Peris \cite{BeP}).
    It follows from the Baire Category Theorem that the countable intersection
	$\bigcap_{f \in \mathcal{A}} \bigcap_{j = 0}^{\infty}\mathcal{HC} (\{ B^{n_{k,j}^{(f)}} \}_{k = 1}^{\infty})$
	is also a dense $G_{\delta}$.  Let $\mathcal{A}_2$ be any countable dense and linear independent set with
    \begin{align}
        \mathcal{A}_2 \subseteq \bigcap_{f \in \mathcal{A}} \bigcap_{j = 0}^{\infty}\mathcal{HC} (\{ B^{n_{k,j}^{(f)}} \}_{k=1}^{\infty}).
        \nonumber
    \end{align}
    
    To complete the proof it remains to show the condition in (\ref{Extending_01}) is satisfied.  Let $L \in \mathcal{B}(X)$ be an invertible operator with $L(\mathcal{A}_1) = \mathcal{A}_2$, and let 
    $f \in \mathcal{A}$.  
    Since
    $L f \in \mathcal{A}_2 \subseteq \bigcap_{f \in \mathcal{A}} \bigcap_{j = 0}^{\infty}\mathcal{HC} (\{ B^{n_{k,j}^{(f)}} \}_{k = 1}^{\infty})$, for each $j \in \mathbb{N}$, there exists a strictly increasing subsequence $(n_{k_r,j}^{(f)})_{r = 1}^{\infty}$ of positive integers for which
    $B^{n_{k_r,j}^{(f)}} (L f) \longrightarrow g_{N+1, j}$ as $r \longrightarrow \infty$.
    Furthermore, it follows from (\ref{Extneding_03}) that
    $T_{i}^{n_{k_r,j}^{(f)}} f \longrightarrow g_{i,j}$ as $r \longrightarrow \infty$ for each integer $i$ with $1 \le i \le N$.
    Therefore, for each $j \in \mathbb{N}$, we have
    \begin{align}
        \left( T_{1} \oplus \dots \oplus T_{N} \oplus B \right)^{n_{k_r,j}^{(f)}} (f, \dots, f, Lf) 
        \longrightarrow 
        (g_{1,j}, \dots, g_{N,j}, g_{N+1, j} ) \mbox{ \ as $r \longrightarrow \infty$.}
        \nonumber
    \end{align}
    Since the set $\{ (g_{1,j}, \dots, g_{N,j}, g_{N+1,j}) : j \in \mathbb{N} \}$ is dense, 
    it follows $(f, \dots, f, Lf) \in \mathcal{HC}(T_1 \oplus \cdots \oplus T_N \oplus B)$, and so $f \in \mbox{d-}\mathcal{HC}(T_1, \dots, T_N, L^{-1}B L)$ by Lemma \ref{Conjugates}.
\end{proof}

From Theorem \ref{Extending}, any finite collection of $N$ d-hypercyclic operators $T_1, \dots, T_N$ may be extended to $N+1$ d-hypercyclic operators $T_1, \dots, T_N, T_{N+1}$.  When the original operators $T_1, \dots, T_N$ are densely d-hypercyclic, the operator $T_{N+1}$ may be selected so the extended collection $T_1, \dots, T_N, T_{N+1}$ remains densely d-hypercyclic by simply selecting the countable linear independent $\mathcal{A}$ in Theorem \ref{Extending} to also be dense. 

\begin{corollary}\label{D}
	If $T_1, \dots, T_N \in \mathcal{B}(X)$ with $N \in \mathbb{N}$ are densely d-hypercyclic operators, then the collection
    \begin{align*}
        \{ T \in \mathcal{B}(X) : \mbox{ $T_1, \dots, T_N, T$ are densely d-hypercyclic }\}
    \end{align*}
    is SOT-dense in $\mathcal{B}(X)$.  
    Moreover, for any countable and linearly independent set 
    $\mathcal{A} \subseteq \mbox{d-}\mathcal{HC}(T_1, \dots, T_N)$, there is an SOT-dense subcollection of operators for which $\mathcal{A} \subseteq \mbox{d-}\mathcal{HC}(T_1, T_2, \dots, T_N,T)$.
\end{corollary}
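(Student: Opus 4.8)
The plan is to derive both assertions from Theorem \ref{Extending} by feeding it a countable, dense, and linearly independent set of d-hypercyclic vectors. Write $D = \mbox{d-}\mathcal{HC}(T_1, \dots, T_N)$. Since $T_1, \dots, T_N$ are densely d-hypercyclic, $D$ is dense; and because
\[
D = \bigcap_{j} \bigcup_{n \in \N} \{ x \in X : (T_1^n x, \dots, T_N^n x) \in U_j \}
\]
for a countable base $(U_j)$ of $X^N$, the set $D$ is a $G_\delta$, hence a dense $G_\delta$ and therefore comeager. The point is that once I have a dense, linearly independent set $\mathcal{A}' \subseteq D$, applying Theorem \ref{Extending} to $\mathcal{A}'$ yields an SOT-dense collection $\{ T : \mathcal{A}' \subseteq \mbox{d-}\mathcal{HC}(T_1, \dots, T_N, T) \}$; for every $T$ in this collection the dense set $\mathcal{A}'$ lies inside $\mbox{d-}\mathcal{HC}(T_1, \dots, T_N, T)$, so $T_1, \dots, T_N, T$ are densely d-hypercyclic. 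This collection is thus an SOT-dense subset of the first displayed collection, which is then SOT-dense as well; monotonicity of SOT-density under set inclusion does the rest.

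The one genuine step is the construction of such an $\mathcal{A}'$, and more generally the extension of a prescribed countable linearly independent $\mathcal{A} \subseteq D$ to a countable dense linearly independent $\mathcal{A}' \subseteq D$ with $\mathcal{A} \subseteq \mathcal{A}'$. I would do this by a greedy enumeration. Fix a countable base $(B_m)_{m \in \N}$ of nonempty open subsets of $X$ and enumerate $\mathcal{A} = \{a_1, a_2, \dots\}$. Inductively, having chosen $b_1, \dots, b_{m-1} \in D$, set $W = \mbox{span}(\mathcal{A} \cup \{b_1, \dots, b_{m-1}\})$ and pick $b_m \in (D \cap B_m) \setminus W$. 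Then $\mathcal{A}' = \mathcal{A} \cup \{b_m : m \in \N\}$ meets every $B_m$, so it is dense; and ordering the vectors as $a_1, a_2, \dots$ followed by $b_1, b_2, \dots$, each $b_m$ lies outside the span of all earlier vectors while $\mathcal{A}$ itself is independent, so any finite dependence relation would force its highest-indexed $b_m$ into the span of the remaining vectors, a contradiction. Hence $\mathcal{A}'$ is linearly independent.

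The crux, and the step I expect to be the main obstacle, is justifying that the choice of $b_m$ is always possible, i.e. that $D \cap B_m \not\subseteq W$; this is exactly where the $G_\delta$ property of $D$ is needed and where density of $D$ alone would not suffice (if $\mathcal{A}$ already spanned a dense subspace, a purely topological argument could fail). The subspace $W$ has countable algebraic dimension, so it is a countable increasing union of finite-dimensional subspaces; since $X$ is infinite-dimensional, each of these is closed and nowhere dense, whence $W$ is meager. On the other hand $D$ is comeager, so $D \cap B_m$ is non-meager in the Baire space $B_m$ and therefore cannot be contained in the meager set $W$; this produces $b_m$ and completes the construction. The two assertions of the corollary then follow at once: the first by running the construction with $\mathcal{A} = \emptyset$, and the ``moreover'' part by running it on the given $\mathcal{A}$.
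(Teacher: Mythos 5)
Your proposal is correct and takes essentially the same route as the paper: the paper also obtains Corollary \ref{D} by applying Theorem \ref{Extending} to a countable, dense, linearly independent set $\mathcal{A}' \subseteq \mbox{d-}\mathcal{HC}(T_1, \dots, T_N)$ containing the prescribed $\mathcal{A}$, so that density of $\mathcal{A}'$ inside $\mbox{d-}\mathcal{HC}(T_1, \dots, T_N, T)$ forces dense d-hypercyclicity of the extended family. The only difference is one of detail: the paper treats the existence of such an $\mathcal{A}'$ as immediate, whereas you justify it with the Baire-category argument (the set of d-hypercyclic vectors is a dense $G_\delta$, hence comeager, and so cannot be trapped in the meager countable-dimensional span of the previously chosen vectors), which is exactly the step the paper leaves implicit.
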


Theorem \ref{Extending} or Corollary \ref{D} may be applied to generalize results about two d-hypercylic operators $T_1, T_2$ to a finite family of d-hypercyclic operators $T_1, \dots, T_N$ with $N \ge 2$.  For an example, given any nonzero vector $g \in X$, it was shown in \cite[Corollary 3.6]{SaSh} that there exists d-hypercylic $T_1, T_2$ and densely d-hypercyclic operators $\widetilde{T}_1, \widetilde{T}_2$ such that
$\mbox{d-}\mathcal{HC}(T_1, T_2) = \mbox{span} \{ g \} \setminus \{ 0 \}$ and 
$\mbox{d-}\mathcal{HC}(T_1, T_2) \cap \mbox{d-}\mathcal{HC}( \widetilde{T}_1, \widetilde{T}_2) = \emptyset$.  
This result showed that a set of d-hypercyclic can be extremely small and nowhere dense.
By repeatedly applying Theorem \ref{Extending} with operators $T_1, T_2$ and $\mathcal{A} = \{ g \}$ and repeatedly applying Corollary \ref{D} with operators $\widetilde{T}_1, \widetilde{T}_2$, we can construct d-hypercyclic operators $T_1, \dots, T_N$ and densely d-hypercyclic operators
$\widetilde{T}_1, \dots \widetilde{T}_N$ with $\mbox{d-}\mathcal{HC}(T_1, \dots, T_N) = \mbox{span} \{ g \} \setminus \{ 0 \}$ and 
$\mbox{d-}\mathcal{HC}(T_1,\dots, T_N) \cap \mbox{d-}\mathcal{HC}( \widetilde{T}_1, \dots,  \widetilde{T}_N) = \emptyset$.

A single hypercyclic operator is always densely hypercyclic but may fail to be weakly mixing \cite{DeRe}. On the other hand, being weakly mixing, satisfying the Hypercyclicity Criterion, and satisfying the Blow-up/Collapse property are all equivalent (see \cite{BeP} and \cite{Le}). In the disjoint setting, none of these equivalences hold. Even the stronger notion of frequent d-hypercyclicity (see \cite{MarMenPui22} or \cite{MarPui21} for the definition) does not ensure being d-weakly mixing contrary to the single operator situation \cite[Theorem 9.8]{GroPer}. The below chart summarizes these varying results regarding disjoint hypercyclic operators in the chart below with references:

\[
\begin{matrix}
    & \ & \ \mbox{d-Hypercyclicity Criterion}   & \ & \ 
    \\
    & \ & \ \Downarrow \quad \not\Uparrow      & \ & \mbox{\cite{SaSh}}   
    \\  
    \mbox{frequently d-hypercyclic} &  
        \begin{aligned} 
            \nLeftarrow & \\ 
            \nRightarrow 
        \end{aligned}  
    & \ \mbox{d-weakly mixing } & \ & \mbox{\cite{MarPui21}} 
    \\
    & \ & \ \Downarrow \quad \not\Uparrow      & \ & \mbox{\cite{BeMaSa}}   
    \\ 
    & \ & \ \mbox{Disjoint Blow-up/Collapse} & \  & \ 
    \\
    & \ & \ \Downarrow  & & \mbox{\cite{BeP1}} 
    \\
    & \ & \ \mbox{densely d-hypercyclic} & \  & \ 
    \\
    & \ & \ \Downarrow \quad \not\Uparrow & \ & \mbox{\cite{SaSh}} 
    \\
    & \ & \ \mbox{d-hypercyclic} & \  & \ 
    \\
    & \ & \
\end{matrix}
\]

In \cite[Theorem 3.1]{Bay23}, Bayart provided an example of two frequently d-hypercyclic unilateral weighted shifts on the sequence space $\ell^1(\N)$. As weighted shifts are never d-weakly mixing \cite[Proposition 3.2]{BeMaSa}, these weighted shifts give a constructive counterexample of frequently d-hypercyclic operators which fail to be d-weakly mixing.
Corollary \ref{D} provides a counterexample for the missing inverse implication between dense d-hypercyclicity and the Disjoint Blow-up/Collapse property

\begin{corollary} 
    For each integer $N \ge 2$, there exists densely d-hypercyclic operators $T_1, \dots, T_N$ which fail to satisfy the Disjoint Blow-up/Collapse property.
\end{corollary}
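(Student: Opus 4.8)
The plan is to build the required operators for general $N$ by bootstrapping from the two--operator example of Sanders and Shkarin with the help of Corollary \ref{D}, and to detect the failure of the Disjoint Blow-up/Collapse property through its behaviour under passing to subfamilies. The starting point is \cite[Corollary 3.6]{SaSh}, which supplies densely d-hypercyclic operators $\widetilde{T}_1, \widetilde{T}_2 \in \mathcal{B}(X)$ together with a nonzero vector $g$ satisfying $\mbox{d-}\mathcal{HC}(\widetilde{T}_1, \widetilde{T}_2) \cap (\mbox{span}\{g\} \setminus \{0\}) = \emptyset$, so that no nonzero multiple of $g$ is d-hypercyclic for $\widetilde{T}_1, \widetilde{T}_2$. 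I would first argue that this pair already realises the case $N = 2$, namely that $\widetilde{T}_1, \widetilde{T}_2$ are densely d-hypercyclic yet fail the Disjoint Blow-up/Collapse property, and then promote this to arbitrary $N$.

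For the promotion step, assume the case $N = 2$ is settled. Since $\widetilde{T}_1, \widetilde{T}_2$ are densely d-hypercyclic, the first assertion of Corollary \ref{D} provides an operator $\widetilde{T}_3$ for which $\widetilde{T}_1, \widetilde{T}_2, \widetilde{T}_3$ are again densely d-hypercyclic; iterating, I obtain densely d-hypercyclic operators $\widetilde{T}_1, \dots, \widetilde{T}_N$ for every $N \ge 2$. It then remains to see that these fail the Disjoint Blow-up/Collapse property, and here I would use that the property is inherited by subfamilies: the blow-up and collapse conditions for $\widetilde{T}_1, \dots, \widetilde{T}_N$ specialise, by prescribing the trivial target $0$ in the coordinates $3, \dots, N$, to the corresponding conditions for $\widetilde{T}_1, \widetilde{T}_2$. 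Hence if $\widetilde{T}_1, \dots, \widetilde{T}_N$ satisfied the property, so would $\widetilde{T}_1, \widetilde{T}_2$, contrary to the base case. This reduces the entire statement to the two--operator assertion.

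The heart of the matter, and the step I expect to be the main obstacle, is that base case: showing the densely d-hypercyclic pair $\widetilde{T}_1, \widetilde{T}_2$ does not satisfy the Disjoint Blow-up/Collapse property. The delicate point is that this failure cannot be detected by a Baire category argument. Indeed, $\mbox{d-}\mathcal{HC}(\widetilde{T}_1, \widetilde{T}_2)$ is the preimage of $\mathcal{HC}(\widetilde{T}_1 \oplus \widetilde{T}_2)$ under the continuous diagonal embedding $x \mapsto (x,x)$, hence is automatically a $G_\delta$ set; being also dense, it is necessarily residual. Consequently residuality of the d-hypercyclic set cannot separate the Disjoint Blow-up/Collapse property from plain dense d-hypercyclicity, and in fact any two densely d-hypercyclic pairs on $X$ must share a d-hypercyclic vector. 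The separation is therefore genuinely dynamical, which is exactly why the disjointness in \cite{SaSh} is forced to involve the non-densely d-hypercyclic pair $T_1, T_2$ with $\mbox{d-}\mathcal{HC}(T_1,T_2) = \mbox{span}\{g\} \setminus \{0\}$. To exploit it I would unwind Salas's definition \cite{Sa4} and show directly, from the construction of $\widetilde{T}_1, \widetilde{T}_2$ in \cite{SaSh}, that the simultaneous collapse-to-zero demanded by the property must fail along the obstructing direction $g$; informally, a witnessing blow-up/collapse scheme could otherwise be assembled into d-hypercyclic behaviour inside $\mbox{span}\{g\}$, contradicting the recorded emptiness of $\mbox{d-}\mathcal{HC}(\widetilde{T}_1, \widetilde{T}_2) \cap (\mbox{span}\{g\} \setminus \{0\})$.

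Two smaller points would need care. First, the iteration only requires the first (existence/density) part of Corollary \ref{D}, so no control over a prescribed set $\mathcal{A}$ is needed here; I should nonetheless check that density of the d-hypercyclic set, rather than the mere existence of one d-hypercyclic vector, is preserved at each stage, which is precisely what Corollary \ref{D} guarantees. Second, I must confirm that the downward heredity is matched to the exact form of the condition in \cite{Sa4}, in which a single time $n$ witnesses all coordinates at once: it is this simultaneity over all $N$ coordinates that makes the property strictly stronger for larger families and hence inherited by subfamilies, closing the reduction.
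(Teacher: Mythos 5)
Your outer scaffolding (extend by repeatedly applying Corollary \ref{D}, then use the fact that the Disjoint Blow-up/Collapse property passes down to subfamilies) is sound and matches the structure of the paper's argument; the heredity step is correct, though it is cleaner to take $U_i = X$ in the discarded coordinates rather than ``prescribing the trivial target $0$.'' The problem is your base case, which you yourself flag as the main obstacle and never actually prove. The paper avoids this obstacle entirely by choosing a different seed: it starts from the De la Rosa--Read operator \cite{DeRe}, a (densely) hypercyclic operator $T_1$ that is not weakly mixing. For a \emph{single} operator, the Blow-up/Collapse property is equivalent to weak mixing (see \cite{BeP} and \cite{Le}), so $T_1$ fails the Blow-up/Collapse property; extending via Corollary \ref{D} and using heredity down to the single operator $T_1$ finishes the proof. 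In other words, the paper's base case is $N=1$, where failure of the property is already known, rather than $N=2$, where it would have to be established from scratch.

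Your proposed route to the $N=2$ base case is not merely incomplete; it cannot work as described. You want to deduce failure of the Disjoint Blow-up/Collapse property for the Sanders--Shkarin pair $\widetilde{T}_1, \widetilde{T}_2$ from the fact that $\mbox{d-}\mathcal{HC}(\widetilde{T}_1, \widetilde{T}_2)$ misses the line $\mbox{span}\{g\} \setminus \{0\}$. But a densely d-hypercyclic pair whose d-hypercyclic vectors avoid a prescribed line can perfectly well satisfy the Disjoint Blow-up/Collapse property: this is exactly what Theorem \ref{d-subspace} and Corollary \ref{C} of this paper produce. There, $Y = \mbox{span}\{f\}$ satisfies $Y \cap \mbox{d-}\mathcal{HC}(T_1, T_2) = \emptyset$, yet $T_1 \oplus T_1$, $T_2 \oplus T_2$ are densely d-hypercyclic, which forces $T_1, T_2$ to satisfy the Disjoint Blow-up/Collapse property by \cite[Proposition 1.11]{BeMaSa}. (Indeed, the operators in \cite{SaSh} are built to be d-weakly mixing, which already implies the property.) So ``d-hypercyclic vectors miss a line'' is compatible with the property holding, and no unwinding of Salas's definition along the direction $g$ can extract a contradiction from that hypothesis alone. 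The failure of the Disjoint Blow-up/Collapse property has to be imported from a genuinely non-weakly-mixing ingredient, which is precisely what \cite{DeRe} supplies.
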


\begin{proof}
    In \cite{DeRe}, it is shown that there exists a (densely) hypercyclic operator $T_1$ which fails to be weakly mixing, and therefore, fails to satisfy the Blow-up/Collapse property. Starting with this operator $T_1$ and repeatedly applying Corollary \ref{D}, we extend to a finite family of densely d-hypercyclic operators $T_1, \dots, T_N$. Since the original operator $T_1$ fails to satisfy the Blow-up/Collapse property, the extended family of operators $T_1, \dots, T_N$ must also fail to satisfy the Disjoint Blow-up/Collapse property.
\end{proof}

Other implications are missing in the above chart. Obviously, frequently d-hypercyclic operators are d-hypercyclic but, as far as we know, the following problems are still open:

\begin{question}\label{Q3}
    Are frequently d-hypercyclic operators also densely d-hypercyclic? Do frequently d-hypercyclic operators satisfy the Disjoint Blow-up/Collapse property?
\end{question}

Disjoint hypercyclicity is concerned with the phenomenon of concurrent approximations using orbits of differing operators with the same initial vector. A similar but weaker notion in the literature was introduced recently by Bernal and Jung \cite{BGJ}.

\begin{definition} \cite[Definition 2.1]{BGJ}\label{D:s-hyper} 
    {\rm
	Operators $T_1, \ldots, T_N \in \mathcal{B}(X)$ with $N \ge 2$ are called \textit{simultaneously hypercyclic}, or \textit{s-hypercyclic} for short, if there exists a vector $f \in X$ such that 
	\begin{align*}
		\Delta\Bigl( \bigoplus_{i=1}^{N} X \Bigr) \subseteq \overline{\{(T_{1}^n f,\ldots,T_{N}^n f):n \in \mathbb{N} \} },
	\end{align*}
	where $\Delta\left( \bigoplus_{i=1}^{N} X \right) = \left\{(x,\ldots,x): x \in X \right\}$ denotes the diagonal of $\bigoplus_{i=1}^{N} X$. Such a vector $f \in X$ is called a \textit{s-hypercyclic vector} of $T_1, \ldots, T_N$. If the set 
    $\mbox{s-} \mathcal{HC}(T_1, \dots, T_N)$ of s-hypercyclic vectors is dense in $X$, then the operators $T_1, \ldots, T_N$ are called as \textit{densely s-hypercyclic}.
    }
\end{definition}

The proof of Theorem \ref{Extending} may be modified to show any finite family of s-hypercyclic operators (or densely s-hypercyclic operators) $T_1, \dots, T_N$ can also be extended.  For s-hypercyclicity, it is only required that the diagonal $\{ (x, \dots, x) : x \in X \}$ is contained within the closure of the orbit $\mbox{orb}( T_1 \oplus \cdots \oplus T_N, (f, \dots, f))$.  
To adjust the proof of Theorem \ref{Extending} for the s-hypercyclicity situation, replace the countable dense set 
$\{ (g_{1,j}, \dots, g_{N,j}, g_{N+1, j}) : j \in \mathbb{N} \}$ in (\ref{Extending_02}) with a countable set
\begin{align*}
    \{ (g_j, \dots, g_j, g_j) : j \in \mathbb{N} \},
\end{align*}
which is dense in the diagonal $\Delta\left( \bigoplus_{i=1}^{N+1} X \right)$ and replace each subsequent vector $g_{i,j}$ with the vector $g_j$.  With these minor modifications within the proof, we obtain following simultaneous hypercyclicity version of Theorem \ref{Extending}.

\begin{corollary}\label{s-hyper}
    Let $T_1, \dots, T_N \in \mathcal{B}(X)$ with $N \in \mathbb{N}$ be s-hypercyclic operators, and let $\mathcal{A} \subseteq \mbox{s-}\mathcal{HC}(T_1, \dots, T_N)$.  If the set $\mathcal{A}$ is countable and linearly independent, then the collection
    \begin{align*}
        \left\{ T \in \mathcal{B}(X) : \mathcal{A} \subseteq \mbox{s-}\mathcal{HC} (T_1, \dots, T_N, T) \right\}
    \end{align*}
    is SOT-dense in $\mathcal{B}(X)$.
\end{corollary}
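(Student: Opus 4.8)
The plan is to follow the architecture of the proof of Theorem \ref{Extending} essentially verbatim, substituting ``s-'' for ``d-'' throughout, but with one structural adjustment forced by the geometry of the diagonal. As in that proof, it suffices by Lemma \ref{SOT-dense} to produce a mixing operator $B \in \mathcal{B}(X)$ together with two countable, dense, linearly independent sets $\mathcal{A}_1 \supseteq \mathcal{A}$ and $\mathcal{A}_2$ so that $\mathcal{A} \subseteq \mbox{s-}\mathcal{HC}(T_1, \dots, T_N, L^{-1} B L)$ for every invertible $L \in \mathcal{B}(X)$ with $L(\mathcal{A}_1) = \mathcal{A}_2$. The first thing I would record is the s-hypercyclic analogue of Lemma \ref{Conjugates}: writing $\Phi = I \oplus \cdots \oplus I \oplus L$ and using that $\Phi$ is a homeomorphism carrying $\mbox{orb}(T_1 \oplus \cdots \oplus T_N \oplus L^{-1}BL, (f,\dots,f))$ onto $\mbox{orb}(T_1 \oplus \cdots \oplus T_N \oplus B, (f, \dots, f, Lf))$, one checks that $f \in \mbox{s-}\mathcal{HC}(T_1, \dots, T_N, L^{-1}BL)$ if and only if the \emph{twisted} diagonal $\{(x, \dots, x, Lx) : x \in X\} = \Phi\bigl(\Delta(\bigoplus_{i=1}^{N+1} X)\bigr)$ lies in the closure of $\mbox{orb}(T_1 \oplus \cdots \oplus T_N \oplus B, (f, \dots, f, Lf))$.

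This is exactly where a naive imitation of Theorem \ref{Extending} must be modified, and I expect it to be the main obstacle. Because $f$ is only s-hypercyclic (not d-hypercyclic) for $T_1, \dots, T_N$, the first $N$ coordinates of the orbit of $f$ can only be steered along the diagonal of $\bigoplus_{i=1}^{N} X$, so I cannot fill all of $\bigoplus_{i=1}^{N+1}X$ as in the d-hypercyclic case. On the other hand, the diagonal $\Delta(\bigoplus_{i=1}^{N+1}X)$ itself is \emph{not} invariant under the conjugation $\Phi$, so merely placing the diagonal in the orbit closure (as the literal replacement $g_{i,j} \mapsto g_j$ in all $N+1$ coordinates would do) does not produce the twisted diagonal required above unless $L = I$. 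The resolution is to aim instead for the conjugation-stable set $\Delta(\bigoplus_{i=1}^{N}X) \times X$, which is closed and contains $\Phi\bigl(\Delta(\bigoplus_{i=1}^{N+1}X)\bigr)$ for \emph{every} invertible $L$.

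Concretely, I would fix a countable set $\{(g_j, \dots, g_j, h_j) : j \in \mathbb{N}\}$ that is dense in $\Delta(\bigoplus_{i=1}^{N}X) \times X$ (equivalently, with $\{(g_j, h_j) : j \in \mathbb{N}\}$ dense in $X \times X$), keeping the first $N$ target coordinates on the diagonal but leaving the last coordinate free. Since each $f \in \mathcal{A}$ is s-hypercyclic for $T_1, \dots, T_N$ and $(g_j, \dots, g_j) \in \Delta(\bigoplus_{i=1}^{N}X)$, for every $j$ there is a strictly increasing sequence $(n_{k,j}^{(f)})_k$ with $T_i^{n_{k,j}^{(f)}} f \longrightarrow g_j$ as $k \to \infty$ for all $1 \le i \le N$; note this sequence depends only on the diagonal targets $g_j$, not on $h_j$ or on $L$. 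Taking $B$ to be a mixing operator (hence hereditarily hypercyclic with respect to the full sequence), each $\mathcal{HC}(\{B^{n_{k,j}^{(f)}}\}_k)$ is a dense $G_\delta$, so by the Baire Category Theorem I can choose $\mathcal{A}_2$ to be a countable, dense, linearly independent subset of $\bigcap_{f \in \mathcal{A}} \bigcap_{j} \mathcal{HC}(\{B^{n_{k,j}^{(f)}}\}_k)$, exactly as before. Then for any invertible $L$ with $L(\mathcal{A}_1) = \mathcal{A}_2$ and any $f \in \mathcal{A}$, the vector $Lf$ lies in $\mathcal{A}_2$ and hence is hypercyclic for each sequence $(B^{n_{k,j}^{(f)}})_k$; passing to a subsequence I can realize $B^{n_{k_r,j}^{(f)}}(Lf) \longrightarrow h_j$ while retaining $T_i^{n_{k_r,j}^{(f)}} f \longrightarrow g_j$. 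This places every $(g_j, \dots, g_j, h_j)$ in the closure of $\mbox{orb}(T_1 \oplus \cdots \oplus T_N \oplus B, (f, \dots, f, Lf))$; since these points are dense in the closed set $\Delta(\bigoplus_{i=1}^{N}X) \times X$, that entire set, and in particular the twisted diagonal $\{(x, \dots, x, Lx) : x \in X\}$, lies in the orbit closure. By the conjugation equivalence recorded above, $f \in \mbox{s-}\mathcal{HC}(T_1, \dots, T_N, L^{-1}BL)$, which, combined with Lemma \ref{SOT-dense}, completes the argument.
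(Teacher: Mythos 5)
Your proof is correct, and its skeleton is exactly the paper's: Lemma \ref{SOT-dense}, a mixing operator $B$, return sequences $(n_{k,j}^{(f)})_{k}$ supplied by the s-hypercyclicity of each $f \in \mathcal{A}$, and the Baire category choice of $\mathcal{A}_2$ inside $\bigcap_{f}\bigcap_{j}\mathcal{HC}(\{B^{n_{k,j}^{(f)}}\}_{k})$. Where you depart from the paper is in the final targeting step, and the departure is a genuine improvement in rigor. The paper's stated recipe is to replace every $g_{i,j}$, including the $(N+1)$-st coordinate, by $g_j$; taken literally, this places only the straight diagonal $\Delta\bigl(\bigoplus_{i=1}^{N+1}X\bigr)$ in the closure of $\mbox{orb}(T_1 \oplus \cdots \oplus T_N \oplus B, (f,\dots,f,Lf))$, and, as you correctly observe, conjugating back yields $\{(x,\dots,x,L^{-1}x) : x \in X\}$ rather than the diagonal in the closure of $\mbox{orb}(T_1 \oplus \cdots \oplus T_N \oplus L^{-1}BL, (f,\dots,f))$ --- the wrong set unless $L = I$. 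Your fix, aiming for the conjugation-stable closed set $\Delta\bigl(\bigoplus_{i=1}^{N}X\bigr) \times X$ by taking $\{(g_j,h_j) : j \in \mathbb{N}\}$ dense in $X \times X$, repairs this uniformly in $L$, and your ``twisted diagonal'' criterion is the correct s-hypercyclic analogue of Lemma \ref{Conjugates}, which the paper leaves implicit. For comparison, the paper's construction can also be repaired without changing its target set: since $Lf \in \mathcal{HC}(\{B^{n_{k,j}^{(f)}}\}_{k})$ has dense orbit under that sequence, one may steer the last coordinate to the $L$-dependent target $Lg_j$ instead of $g_j$, whence $(L^{-1}BL)^{n_{k_r,j}^{(f)}}f \longrightarrow g_j$ and the diagonal is obtained directly. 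Both fixes cost one line; yours has the advantage that the dense target set is chosen once and for all, independently of $L$, which matches the logical order of the argument (the sets $\mathcal{A}_1, \mathcal{A}_2$ must be fixed before $L$ is quantified).
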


As with Corollary \ref{D}, when the original operators $T_1, \dots, T_N$ are densely s-hypercyclic, the operator $T_{N+1}$ may be selected so the extended collection $T_1, \dots, T_N, T_{N+1}$ remains densely s-hypercyclic by simply selecting the countable linear independent $\mathcal{A}$ in Corollary \ref{s-hyper} to also be dense.

\begin{remark}
    {\rm 
    The proof of Theorem \ref{Extending} does not depend on the Banach space structure of the underlying space except for the result of Grivaux \cite[Lemma 2.1]{Gri} used in Lemma \ref{SOT-dense}. But Grivaux's result has already been generalized to Fr\'echet spaces that admit a continuous norm by Albanese \cite{Al}. Therefore, the results in Section \ref{Sec1} also hold for these spaces. In \cite{BP}, Bonet and Peris show that every separable infinite dimensional Fr\'echet space supports a hypercyclic operator, and such spaces always support mixing operators (see \cite[Theorem 8.9]{GroPer}). Thus, for any Fr\'echet space that admits a continuous norm, we can choose a (densely) hypercyclic operator $T_1$ acting on this space and add operators $T_2, \ldots, T_N$ iteratively using Corollary \ref{D} such that $T_1, \ldots, T_N$ are densely d-hypercyclic. This process provides a new method to showing that any Fr\'echet spaces with a continuous norm support densely d-hypercyclic operators $T_1, \ldots, T_N$ for any integer $N \ge 2$.
    }
\end{remark}


\section[d-Hypercyclic Manifolds]{d-Hypercylic Subspaces and Manifolds}\label{Sec2}

The primary aim of Section \ref{Sec2} is to answer Question \ref{Q2} in the negative by providing an example of operators which satisfy the Disjoint Blow-up/Collapse property but fail to satisfy its strong version defined by Salas \cite[Definition 1]{Sa4}. In order to achieve this goal, we first prove the following theorem about generating dense d-hypercylic operators connected to a common closed subspace.

\begin{theorem}\label{d-subspace}
    Let $Y$ be a closed subspace of the Banach space $X$ with codimension $\mbox{dim}(X / Y) \ge M$.
    If the operator $T_1 \in \mathcal{B}(X)$ satisfies $Y \setminus \{ 0 \} \subseteq \mathcal{HC}(T_1)$ and satisfies the Hypercyclicity Criterion, 
    then there exists an operator $T_2 \in \mathcal{B}(X)$ for which the following hold: 
    \begin{enumerate}
    \item[(i)]  
    The direct sum operators $\bigoplus_{i=1}^{M} T_1, \bigoplus_{i=1}^{M} T_2$ are densely d-hypercyclic;
    
    \item[(ii)]  
    We have $Y \setminus \{ 0 \} \subseteq \mathcal{HC} (T_1) \cap \mathcal{HC} (T_2)$ and $Y \cap \mbox{d-} \mathcal{HC}(T_1,T_2) = \emptyset$;
    
    \item[(iii)]  
    The set d-$\mathcal{HC}(T_1,T_2) \cup \{ 0 \}$ contains a subspace of dimension $M$ but fails to contain any subspace of dimension greater than $M$.
\end{enumerate}
\end{theorem}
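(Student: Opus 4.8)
The plan is to realize $T_2$ as a conjugate $T_2 = S T_1 S^{-1}$ by an invertible operator $S = I+N$ that is the identity on a subspace of codimension $M$. First fix a closed $Y'$ with $Y \subseteq Y'$ and $\dim(X/Y') = M$ (possible since $\dim(X/Y) \ge M$). As $T_1$ satisfies the Hypercyclicity Criterion, fix a sequence $(n_k)$, a dense set $X_0$ on which $T_1^{n_k} \to 0$, and right inverses $B_k$ with $B_k \to 0$ and $T_1^{n_k}B_k \to I$ pointwise on a dense set. Since $X_0$ is dense its image in $X/Y'$ is dense, so I may pick $f_1,\dots,f_M \in X_0$ whose classes are a basis of $X/Y'$; then $X = Y' \oplus \mathrm{span}\{f_i\}$ and $T_1^{n_k} f_i \to 0$. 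Projecting $B_k$ onto $Y'$ along $\mathrm{span}\{f_i\}$ gives right inverses $\widetilde B_k$ valued in $Y'$ that still satisfy $\widetilde B_k \to 0$ and $T_1^{n_k}\widetilde B_k \to I$, because $T_1^{n_k}(B_k-\widetilde B_k)$ is a combination of the negligible $T_1^{n_k}f_i$. Running the standard Criterion construction with $\widetilde B_k$ produces a tuple $(w_1,\dots,w_M)\in \mathcal{HC}\bigl(\bigoplus_{i=1}^{M}T_1\bigr)$ with each $w_i \in Y'$, hypercyclic along $(n_k)$. Finally I set $Nf_i = w_i$ and $N|_{Y'}=0$; then $\mathrm{range}(N)=W:=\mathrm{span}\{w_i\}\subseteq Y'=\ker N$, so $N^2=0$, $S=I+N$ is invertible with $S^{-1}=I-N$, and I put $T_2 = S T_1 S^{-1}$.

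With this $S$, part (ii) and the upper bound in (iii) are immediate. By Lemma \ref{Conjugates}, $f \in \mbox{d-}\mathcal{HC}(T_1,T_2)$ iff $(f,S^{-1}f)\in \mathcal{HC}(T_1\oplus T_1)$. Since $S|_{Y'}=I$, for $v\in Y'$ the pair $(v,S^{-1}v)=(v,v)$ lies on the diagonal of $X\oplus X$, which is $(T_1\oplus T_1)$-invariant and proper, so $Y' \cap \mbox{d-}\mathcal{HC}(T_1,T_2)=\emptyset$. Hence $Y\cap\mbox{d-}\mathcal{HC}(T_1,T_2)=\emptyset$, and any subspace $V$ with $V\setminus\{0\}\subseteq \mbox{d-}\mathcal{HC}(T_1,T_2)$ meets $Y'$ only in $0$, giving $\dim V \le \dim(X/Y')=M$. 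For individual hypercyclicity, $y\in\mathcal{HC}(T_2)$ iff $S^{-1}y\in\mathcal{HC}(T_1)$, and $S^{-1}y=y$ for $y\in Y\subseteq Y'$, so $Y\setminus\{0\}\subseteq\mathcal{HC}(T_2)$.

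For (i), I unwind $\mbox{d-}\mathcal{HC}\bigl(\bigoplus^{M}T_1,\bigoplus^{M}T_2\bigr)$ through the conjugacy and an invertible, $\bigl(\bigoplus^{2M}T_1\bigr)$-commuting coordinate change to the condition $(\mathbf{x},N^{(M)}\mathbf{x})\in\mathcal{HC}\bigl(\bigoplus^{2M}T_1\bigr)$, where $N^{(M)}=\bigoplus^{M}N$ and the second block $(Nx_1,\dots,Nx_M)$ lies in $W^M$. The reason for choosing $W$ to be the span of a joint hypercyclic tuple is that every \emph{linearly independent} tuple in $W$ is an invertible scalar matrix applied to $(w_1,\dots,w_M)$, hence again hypercyclic for $\bigoplus^{M}T_1$ along $(n_k)$; such tuples are dense in $W^M$. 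Given a target in $X^M$, I perturb the $\mathrm{span}\{f_i\}$-coordinates of $\mathbf{x}$ slightly so that $(Nx_j)_j$ is such an independent (hence jointly hypercyclic) tuple, then steer the first block by adding $Y'$-valued corrections $\widetilde B_k(\cdot)$ to the $Y'$-parts of the $x_j$; crucially $N$ annihilates these corrections, so they leave the already-hypercyclic second block untouched. Interleaving the target hits for the two blocks along $(n_k)$ makes $(\mathbf{x},N^{(M)}\mathbf{x})$ hypercyclic with $\mathbf{x}$ near the target, yielding dense d-hypercyclicity. The lower bound in (iii) is then free: a single d-hypercyclic vector $(z_1,\dots,z_M)$ for the $M$-fold sums has $z_1,\dots,z_M$ independent, and any combination $\sum c_j z_j$ with $(c_j)\neq 0$ is d-hypercyclic (choose preimage targets summing to the desired pair), so $\mathrm{span}\{z_1,\dots,z_M\}$ is an $M$-dimensional d-hypercyclic subspace.

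The hard part is the simultaneous production of \emph{joint} and \emph{relative} hypercyclic vectors that are forced to live in, or couple through, the finite-codimension subspace $Y'$: the hypothesis $Y\setminus\{0\}\subseteq\mathcal{HC}(T_1)$ only supplies individual dense orbits, and $W^M$ is a finite-dimensional slice, so it is not obvious it carries any jointly hypercyclic tuple. The device that resolves this is the adaptation of the Criterion's right inverses to be $Y'$-valued, obtained by placing the complement $\mathrm{span}\{f_i\}$ inside the stable set $X_0$; this simultaneously manufactures the tuple $(w_i)\in (Y')^M$ and lets me move the first block without perturbing the second. The remaining interleaving and tail control are the usual Hypercyclicity-Criterion bookkeeping (choosing $k$ increasing fast enough that earlier corrections stay $T_1^{n_k}$-small), which I expect to be routine once the right inverses are arranged to respect $Y'$.
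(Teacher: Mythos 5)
Your architecture --- conjugating by $S = I + N$ where $N$ is a rank-$M$ nilpotent vanishing on $Y'$ with range spanned by a jointly hypercyclic tuple $(w_1,\dots,w_M) \in (Y')^M$ --- is sound, and your handling of (ii) and the upper bound in (iii) is correct and even cleaner than the paper's (the observation that any d-hypercyclic subspace $V$ must satisfy $V \cap Y' = \{0\}$, hence $\dim V \le \dim(X/Y') = M$, replaces the paper's explicit computation with the functionals $\lambda_j$). The genuine gap lies in the two steps you defer to ``the standard Criterion construction'' and ``routine bookkeeping.'' First, the standard construction of hypercyclic vectors from the Hypercyclicity Criterion is a Baire/transitivity argument: it yields a residual set of hypercyclic vectors in $X^M$, and residuality says nothing about meeting the closed, nowhere dense subspace $(Y')^M$; the $Y'$-valued right inverses $\widetilde B_k$ do not by themselves produce $(w_1,\dots,w_M) \in (Y')^M$, since the transitivity witnesses $u + \widetilde B_k v$ still carry an $X_0$-part $u$ lying outside $Y'$. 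Second, and more seriously, the interleaving you propose for (i) --- ``choosing $k$ increasing fast enough that earlier corrections stay $T_1^{n_k}$-small'' --- cannot be carried out: once a correction $\widetilde B_{k'}(\cdot)$ is fixed, the Criterion gives no control whatsoever over $T_1^{n_k}\widetilde B_{k'}(\cdot)$ for later $k$; such a fixed vector may have unbounded orbit along every subsequence of $(n_k)$. This is exactly the known obstruction that forces the proof of ``Criterion $\Rightarrow$ hypercyclic'' to go through Baire category rather than a direct series construction (unlike the Kitai/Gethner--Shapiro setting, where $S_k = S^k$ and the full sequence control the cross terms).

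Both gaps are repairable with ingredients you already have, but the mechanism must be Baire category relative to $Y'$, not series bookkeeping. Since $f_1,\dots,f_M \in X_0$, the projection $P$ of $X$ onto $Y'$ along $\mathrm{span}\{f_i\}$ has the property that if $\mathbf{z}$ is hypercyclic for $\bigoplus_{i=1}^{M} T_1$ along $(n_k)$ (such $\mathbf{z}$ exist by B\`es--Peris), then so is $(Pz_1,\dots,Pz_M)$, because the difference is a fixed linear combination of the $f_i$ and $T_1^{n_k} f_i \to 0$; this immediately produces your tuple $(w_i) \in (Y')^M$. Likewise, for (i), fix $\mathbf{x}$ with $(Nx_1,\dots,Nx_M)$ a basis of $W$: for each pair of basic open sets $U, V \subseteq X^M$, the set of corrections $\mathbf{c} \in (Y')^M$ for which some $k$ with $\bigl(\bigoplus_{i=1}^{M} T_1\bigr)^{n_k} N^{(M)}\mathbf{x} \in V$ also satisfies $\bigl(\bigoplus_{i=1}^{M} T_1\bigr)^{n_k}(\mathbf{x}+\mathbf{c}) \in U$ is open and dense in the complete metric space $(Y')^M$ (density via witnesses $u + \widetilde B_k^{(M)} v$ with $u \in (PX_0)^M$), so Baire applied inside $(Y')^M$ finishes, with $\mathbf{c}$ arbitrarily small. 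With this repair, your route is genuinely different from the paper's: the paper never solves a constrained-hypercyclicity problem at all --- it takes Hahn--Banach functionals $\lambda_j$ annihilating $Y$, one hypercyclic vector $(h_1,\dots,h_M,g_1,\dots,g_M)$ of $\bigoplus_{i=1}^{2M} T_1$, conjugates by $L = I + \alpha \sum_j \lambda_j(\cdot) g_j$, and derives all joint approximations by moving that single vector with forward orbits and invertible block matrices commuting with the direct sum; your version instead forces hypercyclic vectors into $(Y')^M$, which costs the extra Baire-in-$Y'$ argument but rewards you with the transparent geometric picture $S|_{Y'} = I$, $\mathrm{range}(S - I) \subseteq Y'$.
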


\begin{proof}
    To construct the desired operator $T_2 \in \mathcal{B}(X)$, first note the codimension $\mbox{dim}(X/Y) \ge M$ means we may select a linearly independent set $\{ z_1, \dots, z_M \}$ in $X$ such that
    \begin{align}
        Y \cap \mbox{span} \{ z_1, \dots, z_M \} = \{ 0 \}.
        \nonumber
    \end{align}
    For each integer $i$ with $1 \le i \le M$, consider the closed subspace
    \begin{align}
        Y_i = Y \oplus \mbox{span} \{ z_j : j \ne i \}.
    \label{d-subspace_02}
    \end{align}
    For each integer $i$ with $1 \le i \le M$, note that $z_i \not\in Y_i$, and so by a standard corollary of the Hahn Banach Theorem
    \cite[Corollary 6.8]{Co}, there exists a linear functional $\lambda_i$ on $X$ for which
    \begin{align}
        \lambda_i(z_i) = 1 \mbox{ \ and \ } \lambda_i(x) = 0 \mbox{ \ for all $x \in Y_i$}.
        \label{d-subspace_03}
    \end{align}
    Next, since the operator $T_1$ satisfies the Hypercyclicity Criterion, the direct sum operator $\bigoplus_{i = 1}^{K} T_1$ is hypercyclic for each $K \in \mathbb{N}$ (see \cite[Theorem 2.1]{BGGE}).  Select any hypercyclic vector
    \begin{align}
        (h_1, \dots, h_M, g_1, \dots, g_M) \in \mathcal{HC} \biggl( \bigoplus_{i = 1}^{2M} T_1 \biggr).
        \label{d-subspace_04}
    \end{align}
    Finally, select a scalar $\alpha$ satisfying $0 < \alpha < \left( \sum_{j = 1}^{M} \| \lambda_j \| \| g_j \| \right)^{-1}$ and define the operator $L:X \longrightarrow X$ by
    \begin{align}
        L x = x + \alpha \sum_{j = 1}^{M} \lambda_j(x) g_j
        \nonumber
    \end{align}
    By the selection of $\alpha$, we get $\| L- I \| < 1$, which in turn implies the operator $L \in \mathcal{B}(X)$ is invertible.  
    Set $T_2 = L^{-1} T_1 L$.

    The verification that the direct sum operators $\bigoplus_{i = 1}^{M} T_1, \bigoplus_{i = 1}^{M} T_2$ are densely d-hypercyclic is similar to an argument presented in \cite[Theorem 2.3]{SaSh}.  Consider the set 
    \begin{align*}
        \Gamma = \left\{ (f_1, \dots, f_M) \in \bigoplus_{i = 1}^{M} X :  (f_1, \dots, f_M, g_1, \dots, g_M) \in 
        \mathcal{HC} \left( \bigoplus_{i = 1}^{2M} T_1 \right) \right\}.
    \end{align*}
    By the selection of the vector in (\ref{d-subspace_04}), one can show 
    $(T_1^{k_1} h_1, \dots, T_1^{k_M} h_M, g_1, \dots, g_M) \in \mathcal{HC} \left( \bigoplus_{i = 1}^{2M} T_1 \right)$ for any positive integers
    $k_1, \dots, k_M$.  
    Hence, the set $\Gamma$ is dense in $\bigoplus_{i = 1}^{M} X$ since it contains the dense set $\mbox{Orb}(T_1, h_1) \oplus \cdots \oplus \mbox{Orb}(T_1, h_M)$.
    Next, define the operator $A: \bigoplus_{i = 1}^{M} X \longrightarrow \mathcal{B}(\mathbb{F}^M)$ by
    \begin{align}
        A(x_1,x_2, \dots, x_M) =
        \left[ 
        \begin{array}{cccc}
            \lambda_{1}(x_1) & \lambda_{1}(x_2)  & \cdots & \lambda_{1}(x_M) \\
            \lambda_{2}(x_1) & \lambda_{2}(x_2)  & \cdots & \lambda_{2}(x_M) \\
            \vdots & \vdots  & \ddots & \vdots \\
            \lambda_{M}(x_1) & \lambda_{M}(x_2)  & \cdots & \lambda_{M}(x_M) \\
        \end{array}
        \right].
        \nonumber
    \end{align}
    Based on the definition of the linear functionals $\lambda_1, \dots, \lambda_N$ in (\ref{d-subspace_03}), one can easily verify the operator $A$ is onto. Since the set $\mathcal{O}$ of invertible matrices is open and dense in $\mathcal{B}(\mathbb{F}^M)$, the set $A^{-1}(\mathcal{O})$ is open and dense in $\bigoplus_{i = 1}^{M} X$.  The direct sum operators $\bigoplus_{i=1}^{M} T_1, \bigoplus_{i=1}^{M} T_2$ are densely d-hypercyclic once we establish the dense set
    $\Gamma \cap A^{-1}(\mathcal{O}) \subseteq \mbox{d-}\mathcal{HC}(\bigoplus_{i = 1}^{M} T_1, \bigoplus_{i = 1}^{M} T_2)$.
    To this end, let $(f_1, \dots, f_M) \in \Gamma \cap A^{-1}(\mathcal{O})$.  For any vectors $x_1, \dots, x_M, y_1, \dots, y_M \in X$, there exists a strictly increasing sequence $(n_k)_{k=1}^{\infty}$ of positive integers such that for integers $j, \ell$ with $1 \le j, \ell \le M$,
    \begin{align}
        T_1^{n_k}(f_{\ell}) & \longrightarrow x_{\ell}
        \mbox{ \ \ and \ \ }
        T_1^{n_k} g_{j}  \longrightarrow \frac{1}{\alpha} \sum_{i = 1}^{M} \beta_{i,j} (y_{i} - x_i) \mbox{ as $k \longrightarrow \infty$},
        \nonumber
    \end{align}
    where $[\beta_{i,j}]_{i, j = 1,\dots, M}$ is the inverse matrix $[A( f_1, \dots, f_M)]^{-1}$.  
    For any integer $\ell$ with $1 \le \ell \le M$, we have
    \allowdisplaybreaks
    \begin{align}
        T_1^{n_k}(L f_{\ell})
        & = 
        T^{n_k} \Bigl( f_{\ell} + \alpha \sum_{j = 1}^{M} \lambda_j(f_{\ell}) g_j \Bigr)
        \nonumber \\
        &=
        T_1^{n_k}f_{\ell} + \alpha \sum_{j = 1}^{M} \lambda_j(f_{\ell}) T_1^{n_k} g_j
        \nonumber \\
        & \longrightarrow
        x_{\ell} + \alpha \sum_{j = 1}^{M} \lambda_j(f_{\ell}) \Bigl( \frac{1}{\alpha} \sum_{i = 1}^{M} \beta_{i,j}(y_i - x_i)\Bigr)
        \mbox{ \ \ as $k \longrightarrow \infty$}
        \nonumber \\
        & =
        x_{\ell} + \sum_{i = 1}^{M} \sum_{j = 1}^{M} \beta_{i, j} \lambda_{j}(f_{\ell})(y_i - x_i)
        \nonumber \\
        & = 
        x_{\ell} + (y_{\ell} - x_{\ell})
        \nonumber \\
        & = y_{\ell}.
        \nonumber
    \end{align}
    Hence, 
    \begin{align*}
        \left( \bigoplus_{i = 1}^{2M} T_1 \right)^{n_k}(f_1, \dots, f_M, Lf_1, \dots, Lf_M)
        \longrightarrow (x_1, \dots, x_M, y_1, \dots, y_M)
        \mbox{ \ \ as $k \longrightarrow \infty$.}
    \end{align*}
    We conclude that $(f_1, \dots, f_M, Lf_1, \dots, Lf_M) \in \mathcal{HC}(\bigoplus_{i = 1}^{2M} T_1)$.
    Applying Lemma \ref{Conjugates} with the direct sum operators $\bigoplus_{i = 1}^{M} T_1$, $\bigoplus_{i = 1}^{M} T_1$ and their conjugates 
    $\bigoplus_{i = 1}^{M} T_1$, $\bigoplus_{i = 1}^{M} T_2 = \bigoplus_{i = 1}^{M} L^{-1}T_1L$ gives us that the vector  
    $(f_1, \dots, f_M) \in \mbox{d-}\mathcal{HC}(\bigoplus_{i = 1}^{M} T_1, \bigoplus_{i = 1}^{M} T_2)$.
    
    To establish $Y \setminus \{ 0 \} \subseteq \mathcal{HC}(T_1) \cap \mathcal{HC}(T_2)$ and $Y \cap \mbox{d-}\mathcal{HC}(T_1, T_2) = \emptyset$, observe that for any nonzero vector $y \in Y \setminus \{ 0 \} \subseteq \mathcal{HC}(T_1)$, we have
    \begin{align*}
        L y 
        & = 
        y + \alpha \sum_{j = 1}^{M} \lambda_j(y) g_j
        =
        y, 
        \mbox{ \ \ by (\ref{d-subspace_02}) and (\ref{d-subspace_03}). }
    \end{align*}
    Since $L y = y \in \mathcal{HC}(T_1)$ and $(y, Ly) = (y, y) \not \in \mathcal{HC}(T_1 \oplus T_1)$, it follow 
    $y \in \mathcal{HC}(T_2)$ and $y \not\in \mbox{d-}\mathcal{HC}(T_1, T_2)$ by applying Lemma \ref{Conjugates} with
    $T_1$, $T_2 = L^{-1} T_1 L$.

    To show the set $\mbox{d-}\mathcal{HC}(T_1, T_2) \cup \{ 0 \}$ contains a subspace of dimension $M$, select any d-hypercyclic vector 
    $(h_1, \dots, h_M) \in \mbox{d-}\mathcal{HC}(\bigoplus_{i = 1}^{M} T_1, \bigoplus_{i = 1}^{M} T_2)$ and set 
    $Y_0 = \mbox{span} \{ h_1, \dots, h_M \}$. 
    In order for $(h_1, \dots, h_M)$ to be a d-hypercyclic vector for the direct sum operators 
    $\bigoplus_{i = 1}^{M} T_1, \bigoplus_{i = 1}^{M} T_2$, the vectors $h_1, \dots, h_M$ must be linearly independent, and so the subspace $Y_0$ has dimension $M$.  To verify $Y_0 \subseteq \mbox{d-}\mathcal{HC}(T_1, T_2) \cup \{ 0 \}$, consider a nontrivial linear combination $f = \gamma_1 h_1 + \cdots + \gamma_M h_M$ in $Y_0$.  Without loss of generality, assume $\gamma_1 \ne 0$.  Since $(h_1, \dots, h_M) \in \mbox{d-} \mathcal{HC}( \bigoplus_{i=1}^{M} T_1, \bigoplus_{i=1}^{M} T_2 )$, for any $x_1, x_2$, there exists a strictly increasing sequence $(n_k)_{k=1}^{\infty}$ of positive 
    integers such that
    \begin{align}
        T_1^{n_k} h_1 & \longrightarrow \frac{1}{\gamma_1} x_1 \mbox{ and } T_2^{n_k} h_1  \longrightarrow \frac{1}{\gamma_1} x_2 \mbox{ as $k \longrightarrow \infty$}
        \nonumber
    \end{align}
    and for integers $i$ with $2 \le i \le M$,
    \begin{align}
        T_1^{n_k} h_i & \longrightarrow 0 \mbox{ and } T_2^{n_k} h_i  \longrightarrow0 \mbox{ as $k \longrightarrow \infty$}.
        \nonumber
    \end{align}
    It follows that 
    $T_1^{n_k}(f) = T_1^{n_k}( \gamma_1 h_1 + \cdots + \gamma_M h_M ) \longrightarrow x_1$ and $T_2^{n_k} f = T_2^{n_k}( \gamma_1 h_1 + \cdots + \gamma_M h_M ) \longrightarrow x_2$ as $k \longrightarrow \infty$.  
    Thus, $f = \gamma_1 h_1 + \cdots + \gamma_M h_M$ is a d-hypercyclic vector for $T_1, T_2$.  

    Last, to show the set $\mbox{d-}\mathcal{HC}(T_1, T_2) \cup \{ 0 \}$ fails to contain any subspace with dimension greater than M, it suffices to show that for any $M+1$ linearly independent vectors $x_1, \dots, x_{M+1}$ in $X$, there exists a nontrivial linear combination of these vectors outside of the set $\mbox{d-}\mathcal{HC}(T_1, T_2) \cup \{ 0 \}$.  Take any linearly independent vectors $x_1, \dots, x_{M+1}$ in $X$ and define the vectors $b_1, \dots, b_{M+1}$ in $\mathbb{F}^M$ by
    \begin{align}
        b_i = (\lambda_1(x_i), \lambda_2(x_i), \dots, \lambda_M(x_{i})).
        \nonumber
    \end{align}
    Since we have the $M+1$ vectors $b_1, \dots, b_{M+1}$ in the $M$ dimensional space $\mathbb{F}^M$, the zero vector in $\mathbb{F}^M$ is a nontrivial linear combination of the vectors $b_1, \dots, b_{M+1}$.  That is,
    \begin{align}
        (0, 0, \dots, 0)
        & = 
        \beta_1 b_1 + \beta_2 b_2 + \dots + \beta_{M+1} b_{M+1}
        \label{d-subspace_11} 
        \\
        & = 
        \biggl( \sum_{i = 1}^{M+1} \beta_i \lambda_1 (x_i), \sum_{i = 1}^{M+1} \beta_i \lambda_2 (x_i), \dots, \sum_{i = 1}^{M+1} \beta_i \lambda_{M}(x_{i}) \biggr)
        \nonumber 
    \end{align}
    for some nontrivial collection of scalars $\beta_1, \dots, \beta_{M+1} \in \mathbb{F}$.
    Consider the nontrivial linear combination 
    \begin{align}
        x = \beta_1 x_1 +  \dots + \beta_{M+1} x_{M+1}.
        \nonumber
    \end{align}
    By the definition of the invertible operator $L$, we have
    \begin{align}
        L x
        & = 
        x + \alpha \sum_{j = 1}^{M} \lambda_j(x) g_j
        \nonumber \\
        & = 
        x + \alpha \sum_{j = 1}^{M} \lambda_j \biggl( \sum_{i = 1}^{M+1} \beta_i x_i \biggr) g_j
        \nonumber \\
        & = 
        x + \alpha \sum_{j = 1}^{M}\biggl( \sum_{i = 1}^{M+1} \beta_i \lambda_j ( x_i ) \biggr) g_j
        \nonumber \\
        & = 
        x + \alpha \sum_{j = 1}^{M} 0 \cdot g_j, \mbox{ by (\ref{d-subspace_11})}
        \nonumber \\
        & = 
        x.
        \nonumber
    \end{align}
    Hence, we have $(x, Lx) = (x,x) \not\in \mathcal{HC}(T_1 \oplus T_1)$, and so the vector 
    $x \not\in \mathcal{HC}(T_1, T_2)$ by applying Lemma \ref{Conjugates} with $T_1$, $T_2 = L^{-1} T_1 L$.
\end{proof}

As noted in the proof of Theorem \ref{Extending}, every separable infinite dimensional Banach space supports a mixing operator, and so it satisfy the Hypercyclicity Criterion. Given such a hypercyclic operator $T \in \mathcal{B}(X)$, one can easily find examples of closed subspaces $Y$ which satisfy the assumptions in Theorem \ref{d-subspace}.  

Applications of Theorem \ref{d-subspace} generate interesting distinctions between hypercyclicity and disjoint hypercyclicity.
For example, a hypercyclic subspace of an operator $T \in \mathcal{B}(X)$ is a closed infinite dimensional subspace in which every nonzero vector is a hypercyclic vector for the operator $T$. An application of Theorem \ref{d-subspace} shows that two dense d-hypercyclic operators $T_1, T_2$, which share a common hypercyclic subspace, may still fail to have a d-hypercylic subspace.

\begin{corollary}\label{B}
    Every separable, infinite dimensional Banach space $X$ admits densely d-hypercyclic operators $T_1,T_2$ for which the operators $T_1, T_2$ have a common hypercyclic subspace but fail to possess a d-hypercyclic subspace.  
    Furthermore, we may assume the only subspaces within the set d-$\mathcal{HC}(T_1,T_2) \cup \{ 0 \}$ are one dimensional.
\end{corollary}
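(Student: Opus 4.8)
The plan is to apply Theorem \ref{d-subspace} with the codimension parameter set to $M = 1$, which simultaneously collapses every d-hypercyclic subspace to dimension at most one (through conclusion (iii)) and supplies a common hypercyclic subspace (through conclusion (ii)). To feed the theorem, the essential input I need is an operator $T_1 \in \mathcal{B}(X)$ that satisfies the Hypercyclicity Criterion and possesses a hypercyclic subspace, that is, a closed infinite-dimensional subspace $M_0 \subseteq X$ with $M_0 \setminus \{ 0 \} \subseteq \mathcal{HC}(T_1)$. On an arbitrary separable infinite-dimensional Banach space this is a known existence fact (see, e.g., \cite{GroPer}), and the operators supplied by the standard theory of hypercyclic subspaces satisfy the Hypercyclicity Criterion as part of their construction.

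Next I would shrink $M_0$ to meet the codimension hypothesis $\mbox{dim}(X/Y) \ge 1$. Picking a continuous linear functional $\phi$ that does not vanish identically on $M_0$ and setting $Y = \{ x \in M_0 : \phi(x) = 0 \}$ produces a closed subspace of $M_0$ of codimension one inside $M_0$; hence $Y$ is still infinite dimensional, and $Y \subsetneq M_0 \subseteq X$ forces $\mbox{dim}(X/Y) \ge 1$. Moreover $Y \setminus \{ 0 \} \subseteq M_0 \setminus \{ 0 \} \subseteq \mathcal{HC}(T_1)$, so the pair $(Y, T_1)$ satisfies all hypotheses of Theorem \ref{d-subspace} with $M = 1$, and the theorem yields an operator $T_2 \in \mathcal{B}(X)$.

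It then remains to read off the three conclusions at $M = 1$. Conclusion (i) is exactly the assertion that $T_1, T_2$ are densely d-hypercyclic. Conclusion (ii) gives $Y \setminus \{ 0 \} \subseteq \mathcal{HC}(T_1) \cap \mathcal{HC}(T_2)$, so the closed infinite-dimensional subspace $Y$ is a common hypercyclic subspace for $T_1$ and $T_2$. Conclusion (iii) says that $\mbox{d-}\mathcal{HC}(T_1, T_2) \cup \{ 0 \}$ contains a one-dimensional subspace but no subspace of dimension greater than one; since any d-hypercyclic subspace would be an infinite-dimensional closed subspace contained in $\mbox{d-}\mathcal{HC}(T_1, T_2) \cup \{ 0 \}$, no d-hypercyclic subspace can exist, and the only subspaces inside that set are one dimensional, which is the ``furthermore'' claim.

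The main obstacle lies entirely in the first step: guaranteeing, on \emph{every} separable infinite-dimensional Banach space, the existence of a single operator that both satisfies the Hypercyclicity Criterion and carries a hypercyclic subspace. Once that input is in hand, the remainder is routine bookkeeping, namely passing to the proper infinite-dimensional subspace $Y$ (harmless, since membership in $\mathcal{HC}(T_1)$ is inherited by every subspace of $M_0$) and specializing the three clauses of Theorem \ref{d-subspace} to $M = 1$.
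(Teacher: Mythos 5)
Your proposal is correct and follows essentially the same route as the paper: invoke the known existence (via \cite[Theorem 10.28]{GroPer}) of an operator satisfying the Hypercyclicity Criterion with a hypercyclic subspace, then apply Theorem \ref{d-subspace} with $M=1$ and read off conclusions (i)--(iii). Your extra step of cutting the hypercyclic subspace down by the kernel of a functional to secure $\dim(X/Y)\ge 1$ is a harmless refinement of what the paper simply asserts when choosing $Y$.
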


\begin{proof}
    Let $T_1 \in \mathcal{B}(X)$ be any operator that satisfies the Hypercyclicity Criterion and possess a hypercyclic subspace $Y$ with codimension dim$(X /Y) \ge 1$.  See \cite[Theorem 10.28]{GroPer} for an example of such an operator. 
    Applying Theorem \ref{d-subspace} with $M = 1$ yields an operator $T_2$ such that the operators $T_1, T_2$ are densely d-hypercyclic and $Y$ is a hypercyclic subspace for both $T_1$ and $T_2$. 
    Moreover, the operator $T_2$ is selected so that the only subspaces contained within 
    $\mbox{d-}\mathcal{HC}(T_1, T_2) \cup \{ 0 \}$ are one dimensional. Since $\mbox{d-}\mathcal{HC}(T_1, T_2) \cup \{ 0 \}$ fails to contain any infinite dimensional subspace, these densely d-hypercyclic operators fail to possess a d-hypercyclic subspace.
\end{proof}

It is well-known that a hypercyclic operator $T \in \mathcal{B}(X)$ always admits a dense hypercyclic manifold.  That is, a dense linear manifold in which every nonzero vector in the manifold is hypercyclic vector for the operator $T$ (see Herrero \cite{He} and Bourdon \cite{Bd} for the complex case and B\`es \cite{Be} for the real case).  In the disjoint setting, 
clearly being densely d-hypercylic is necessary condition for the existence of a dense d-hypercyclic manifold, especially since
there are examples of non-dense d-hypercyclic operators (see \cite[Corollary 3.5]{SaSh}).
An application of Theorem \ref{d-subspace} shows dense d-hypercyclicity fails to be sufficient condition to ensure the existence of a dense d-hypercylic manifold.

\begin{corollary}\label{A}
    For each integer $N \ge 2$, every separable infinite dimensional Banach space $X$ admits densely d-hypercyclic operators $T_1, \dots, T_N$ that fail to possess a dense d-hypercyclic manifold.  
    Furthermore, we may assume the only subspaces contained within the set d-$\mathcal{HC}(T_1,\dots, T_N) \cup \{ 0 \}$ are one dimensional. 
\end{corollary}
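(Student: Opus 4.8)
The plan is to reduce the general case $N \ge 2$ to the two–operator situation already handled by Theorem~\ref{d-subspace} (taken with $M = 1$) and then to extend by Corollary~\ref{D}, exploiting the fact that adjoining an operator can only shrink the set of d-hypercyclic vectors. First I would fix the separable infinite dimensional Banach space $X$ and produce a starting pair. Choose a mixing operator $T_1 \in \mathcal{B}(X)$ (these exist on every such space and satisfy the Hypercyclicity Criterion), pick a hypercyclic vector $v$ for $T_1$, and set $Y = \mbox{span}\{ v \}$. Since $\mathcal{HC}(T_1)$ is invariant under multiplication by nonzero scalars and $\mbox{dim}(X/Y) = \infty \ge 1$, the hypotheses of Theorem~\ref{d-subspace} hold with $M = 1$. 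Applying it yields an operator $T_2$ for which $T_1, T_2$ are densely d-hypercyclic and, by part (iii), the only subspaces contained in $\mbox{d-}\mathcal{HC}(T_1, T_2) \cup \{ 0 \}$ are one dimensional (such a pair is also produced in the proof of Corollary~\ref{B}).

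Next I would build the remaining operators iteratively. Because $T_1, T_2$ are densely d-hypercyclic, Corollary~\ref{D} guarantees an SOT-dense, hence nonempty, collection of operators $T_3$ for which $T_1, T_2, T_3$ are again densely d-hypercyclic; choosing any such $T_3$ and repeating $N-2$ times produces densely d-hypercyclic operators $T_1, \dots, T_N$. The key observation is the monotonicity inclusion $\mbox{d-}\mathcal{HC}(T_1, \dots, T_N) \subseteq \mbox{d-}\mathcal{HC}(T_1, T_2)$: if the orbit $\{ (T_1^n x, \dots, T_N^n x) : n \in \mathbb{N} \}$ is dense in $\bigoplus_{i=1}^{N} X$, then projecting onto the first two coordinates shows $\{ (T_1^n x, T_2^n x) : n \in \mathbb{N} \}$ is dense in $X \times X$, so $x \in \mbox{d-}\mathcal{HC}(T_1, T_2)$. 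Consequently every subspace contained in $\mbox{d-}\mathcal{HC}(T_1, \dots, T_N) \cup \{ 0 \}$ is also a subspace of $\mbox{d-}\mathcal{HC}(T_1, T_2) \cup \{ 0 \}$, and is therefore one dimensional, which establishes the \emph{furthermore} clause for every $N \ge 2$.

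Finally I would deduce the failure of a dense d-hypercyclic manifold. By definition such a manifold is a dense linear subspace $W$ of $X$ with $W \subseteq \mbox{d-}\mathcal{HC}(T_1, \dots, T_N) \cup \{ 0 \}$; but a dense subspace of the infinite dimensional space $X$ must itself be infinite dimensional, contradicting the one dimensionality of every subspace inside $\mbox{d-}\mathcal{HC}(T_1, \dots, T_N) \cup \{ 0 \}$. I do not expect a genuine obstacle here: the whole argument rests on Theorem~\ref{d-subspace} and Corollary~\ref{D}, and the only substantive point to verify is the monotonicity inclusion, which is immediate from projecting orbits. The one piece of bookkeeping to keep straight is that each application of Corollary~\ref{D} preserves dense d-hypercyclicity, so the iteration can continue, while the inclusion simultaneously guarantees that no step can enlarge a subspace living inside the d-hypercyclic set.
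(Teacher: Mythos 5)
Your proposal is correct and takes essentially the same route as the paper's own proof: apply Theorem~\ref{d-subspace} with $M = 1$ to $Y = \mbox{span}\{f\}$ for a hypercyclic vector $f$ of an operator $T_1$ satisfying the Hypercyclicity Criterion, extend iteratively via Corollary~\ref{D}, and invoke the inclusion $\mbox{d-}\mathcal{HC}(T_1, \dots, T_N) \subseteq \mbox{d-}\mathcal{HC}(T_1, T_2)$ to transfer the one-dimensionality of subspaces to the extended family. The only differences are that you spell out details the paper leaves implicit (the scalar invariance of $\mathcal{HC}(T_1)$ making $Y \setminus \{0\} \subseteq \mathcal{HC}(T_1)$, the projection argument for the monotonicity inclusion, and the fact that a dense manifold must be infinite dimensional), which are all correct.
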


\begin{proof}
    Let $T_1$ be any operator in $\mathcal{B}(X)$ which satisfies the Hypercyclicity Criterion, and let $Y = \mbox{span} \{ f \}$, where $f$ is a hypercyclic vector of $T_1$.   Since the codimension $\mbox{dim}(X/Y) = \infty$, applying Theorem \ref{d-subspace} with $M = 1$ yields an operator $T_2$ for which $T_1, T_2$ are densely d-hypercyclic and for which the only subspaces contained within $\mbox{d-}\mathcal{HC}(T_1, T_2) \cup \{ 0 \}$ are one dimensional.  By Corollary \ref{D}, we may extend the operators $T_1, T_2$ to $N$ densely d-hypercylic operators $T_1, \dots, T_N$.  Furthermore, 
    since $\mbox{d-}\mathcal{HC}(T_1, \dots, T_N) \subseteq \mbox{d-}\mathcal{HC}(T_1, T_2)$, the only subspaces contained within the set d-$\mathcal{HC}(T_1,\dots, T_N) \cup \{ 0 \}$ are one dimensional.  Clearly, in this situation, these densely d-hypercyclic operators fail to possess a dense d-hypercyclic manifold.
\end{proof}

Formally, operators $T_1, \dots, T_N \in \mathcal{B}(X)$ with $N \ge 2$ satisfy the \textit{Strong Disjoint Blow-up/Collapse property} (respectively, \textit{Disjoint Blow-up/Collapse property}) provided for each $K \in \mathbb{N}$ (respectively, for $K = 1$ only) and for any non-empty open sets $W$, $U_{1-K}, \dots, U_0, U_1, \dots, U_N$ of $X$ with $0 \in W$, there exists an integer $n \in \mathbb{N}$ such that the following holds:
\begin{align}
    W \cap T_{1}^{-n}(U_1) \cap \cdots \cap T_N^{-n}(U_N) 
    \ne \emptyset
    \mbox{ and }
    U_{\ell} \cap T_{1}^{-n}(W) \cap \cdots \cap T_N^{-n}(W) 
    \ne \emptyset
    \mbox{ for integers $1-K \le \ell \le 0$.}
    \nonumber
\end{align}
In the disjoint hypercyclicity literature, the Disjoint Blow-up/Collapse property arose first as the disjoint analog of the Blow-up/Collapse property.  In \cite{Sa4}, Salas provided the stronger version of the Disjoint Blow-up/Collapse property and showed this Strong Disjoint Blow-up/Collapse property is a sufficient condition to ensure the existence of a dense d-hypercyclic manifold for a finite family of operators. In the same paper, Salas also showed that among the class of weighted shift operators Disjoint Blow-up/Collapse property and its stronger version are equivalent. In the next corollary, we show that this is not the case in general and, therefore, answer Question \ref{Q2} in the negative.

\begin{corollary}\label{C}
    Every separable, infinite dimensional Banach space $X$ admits operators $T_1,T_2$ which satisfy the Disjoint Blow-up/Collapse property but fail to satisfy the Strong Disjoint Blow-up/Collapse property.  
\end{corollary}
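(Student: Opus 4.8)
The plan is to obtain the operators $T_1, T_2$ as a single application of Theorem \ref{d-subspace} with the parameter $M = 2$, and then to read the two required properties directly off parts (i) and (iii) of that theorem, combined with Salas's sufficient condition for a dense d-hypercyclic manifold. First I would fix a mixing operator $T_1 \in \mathcal{B}(X)$, which exists on every separable infinite dimensional Banach space and satisfies the Hypercyclicity Criterion, choose a hypercyclic vector $f$ for $T_1$, and set $Y = \mbox{span} \{ f \}$. Since every nonzero scalar multiple of a hypercyclic vector is again hypercyclic, we have $Y \setminus \{ 0 \} \subseteq \mathcal{HC}(T_1)$, and $\mbox{dim}(X / Y) = \infty \ge 2$. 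Applying Theorem \ref{d-subspace} with $M = 2$ then produces an operator $T_2$ for which (i) the direct sums $\bigoplus_{i=1}^{2} T_1, \bigoplus_{i=1}^{2} T_2$ are densely d-hypercyclic, and (iii) the set $\mbox{d-}\mathcal{HC}(T_1, T_2) \cup \{ 0 \}$ contains no subspace of dimension greater than $2$.

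Next I would verify that $T_1, T_2$ satisfy the Disjoint Blow-up/Collapse property. Given nonempty open sets $W, U_0, U_1, U_2 \subseteq X$ with $0 \in W$, I would invoke property (i): because the d-hypercyclic vectors of $\bigoplus_{i=1}^{2} T_1, \bigoplus_{i=1}^{2} T_2$ are dense in $X \oplus X$, the nonempty open set $W \times U_0$ contains such a d-hypercyclic vector $(a, b)$. Its joint orbit is then dense in $(X \oplus X) \oplus (X \oplus X)$, so there is an $n \in \mathbb{N}$ with $(T_1^n a, T_1^n b) \in U_1 \times W$ and $(T_2^n a, T_2^n b) \in U_2 \times W$. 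The vector $a \in W$ witnesses $W \cap T_1^{-n}(U_1) \cap T_2^{-n}(U_2) \ne \emptyset$ and the vector $b \in U_0$ witnesses $U_0 \cap T_1^{-n}(W) \cap T_2^{-n}(W) \ne \emptyset$, both for the same $n$; this is precisely the Disjoint Blow-up/Collapse property in the case $K = 1$.

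Finally I would show that $T_1, T_2$ fail the Strong Disjoint Blow-up/Collapse property by contraposition against Salas's result \cite{Sa4}. Salas proved that the Strong version forces the existence of a dense d-hypercyclic manifold, that is, a dense linear subspace all of whose nonzero vectors lie in $\mbox{d-}\mathcal{HC}(T_1, T_2)$; since $X$ is infinite dimensional, any such dense subspace is infinite dimensional. But property (iii) guarantees that $\mbox{d-}\mathcal{HC}(T_1, T_2) \cup \{ 0 \}$ contains no subspace of dimension exceeding $2$, hence no infinite dimensional subspace, so no dense d-hypercyclic manifold can exist. Therefore $T_1, T_2$ cannot satisfy the Strong Disjoint Blow-up/Collapse property, completing the negative answer to Question \ref{Q2}.

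The step I expect to be the crux is the simultaneity in the second paragraph: the Disjoint Blow-up/Collapse property demands a single iterate $n$ that blows $W$ up onto $(U_1, U_2)$ and collapses $U_0$ onto $(W, W)$ at once. This is exactly why I take $M = 2$ rather than $M = 1$: dense d-hypercyclicity of the original pair $T_1, T_2$ alone would only produce the blow-up and the collapse for possibly different iterates, whereas passing to the doubled system $\bigoplus_{i=1}^{2} T_1, \bigoplus_{i=1}^{2} T_2$ (which is the content of d-weak mixing, recorded in the chart as implying the Disjoint Blow-up/Collapse property via \cite{BeMaSa}) synchronizes them into one iterate. Everything else is bookkeeping once Theorem \ref{d-subspace} is in hand.
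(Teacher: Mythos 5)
Your proposal is correct and follows essentially the same route as the paper: apply Theorem \ref{d-subspace} with $M = 2$ to $Y = \mathrm{span}\{f\}$, read the Disjoint Blow-up/Collapse property off part (i) and the failure of the Strong version off part (iii) combined with Salas's theorem that the Strong property forces a dense d-hypercyclic manifold. The only difference is that where the paper cites \cite[Proposition 1.11]{BeMaSa} for the implication ``$T_1 \oplus T_1$, $T_2 \oplus T_2$ densely d-hypercyclic $\Rightarrow$ $T_1, T_2$ satisfy the Disjoint Blow-up/Collapse property,'' you prove that implication inline (correctly), which is a self-contained but equivalent way of handling that step.
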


\begin{proof}
    Let $T_1$ be any operator in $\mathcal{B}(X)$ which satisfies the Hypercyclicity Criterion, and let $Y = \mbox{span} \{ f \}$ where $f$ is a hypercyclic vector of $T_1$.   Applying Theorem \ref{d-subspace} with $M = 2$ yields an operator $T_2$ such that the the direct sum operators $T_1 \oplus T_1$, $T_2 \oplus T_2$ are densely d-hypercyclic, but the set d-$\mathcal{HC}(T_1,T_2) \cup \{ 0 \}$ fails to contain a subspace of dimension great than two.  Hence, the densely d-hypercyclic operators $T_1,T_2$ fail to possess a dense d-hypercyclic manifold.  Since the direct sum operators $T_1 \oplus T_1$, $T_2 \oplus T_2$ are densely d-hypercyclic, the operators $T_1,T_2$ must satisfy the Disjoint Blow-up/Collapse property (see \cite[Proposition 1.11]{BeMaSa}).  On the other hand, Salas \cite{Sa4} showed that operators which satisfy the Strong Disjoint Blow-up/Collapse property must possess dense d-hypercyclic manifold, and so these densely d-hypercyclic operators $T_1,T_2$ fail to satisfy the Strong Disjoint Blow-up/Collapse property.
\end{proof}

Even though the Disjoint Blow-up/Collapse property and the Strong Disjoint Blow-up/Collapse property are similar in presentation, there is a significant difference between the operators satisfying these two properties.  From the proof of our Corollary \ref{C}, we see that operators satisfying the Disjoint Blow-up/Collapse property may fail to have a dense d-hypercyclic manifold while those operators satisfying the Strong version of the property must have such a dense manifold.




\end{document}